\documentclass[12pt]{amsart}
\usepackage{a4wide,enumerate,xcolor}
\usepackage{amsmath,amssymb}
\usepackage{comment}

\usepackage{graphicx}
\usepackage{float}
\usepackage{multirow}
\usepackage{multicol}

\usepackage{algorithm}
\usepackage{algorithmic}

\usepackage{diagbox}

\let\pa\partial

\def\ii{\mathrm{i}}

\numberwithin{equation}{section}

\newtheorem{theorem}{Theorem}[section]
\newtheorem{lemma}[theorem]{Lemma}
\newtheorem{proposition}[theorem]{Proposition}
\newtheorem{remark}[theorem]{Remark}
\newtheorem{corollary}[theorem]{Corollary}

\graphicspath{{figures/}}

\begin{document}

\title[A Bernoulli Phase-Fitted Method for the Helmholtz Equation]{A Bernoulli Phase-Fitted finite difference method \\ with wavenumber-explicit analysis\\
for the Helmholtz problem}

\author[A. J\"ungel]{Ansgar J\"ungel}
\address{Institute of Analysis and Scientific Computing, TU Wien, Wiedner Hauptstra\ss e 8--10, 1040 Wien, Austria}
\email{juengel@tuwien.ac.at} 

\author[P. Li]{Panchi Li}
\address{School of Mathematical Sciences, Soochow University, Suzhou, 215006, China}
\email{lipch@suda.edu.cn} 

\author[Z. Sun]{Zhiwei Sun}
\address{Institute of Analysis and Scientific Computing, TU Wien, Wiedner Hauptstra\ss e 8--10, 1040 Wien, Austria}
\email{zhiwei.sun@tuwien.ac.at}

\author[Z. Zhang]{Zhiwen Zhang}
\address{Department of Mathematics, The University of Hong Kong, Hong Kong, China}
\email{zhangzw@hku.hk} 

\date{\today}

\thanks{The first author acknowledges partial support from the Austrian Science Fund (FWF), grant 10.55776/PAT2687825, and from the Austrian Federal Ministry for Women, Science and Research and implemented by \"OAD, grant MULT09/2025. This work has received funding from the European Research Council (ERC) under the European Union's Horizon 2020 research and innovation programme, ERC Advanced Grant NEUROMORPH, no.~101018153. The research of Z. Zhang was supported by the National Natural Science Foundation of China (Project 92470103), the Hong Kong RGC grant (Projects 17304324 and 17300325), the Seed Funding Programme for Basic Research (HKU), and the Hong Kong RGC Research Fellow Scheme 2025. For open-access purposes, the authors have applied a CC BY public copyright license to any author-accepted manuscript version arising from this submission.}

\begin{abstract}
A new Bernoulli phase-fitted finite difference method for the Helmholtz equation is introduced, obtained by applying a complexified Scharfetter--Gummel flux to the one-way factors of the operator. The rigorous analysis is developed for the one-dimensional Helmholtz problem with impedance boundary conditions. For the homogeneous problem, the scheme reproduces sampled plane-waves exactly, both in the interior and at the discrete impedance boundary closures. For the inhomogeneous problem, we prove wavenumber-explicit stability, consistency, and second-order convergence estimates for all nondegenerate mesh wavenumbers \(kh\notin\pi\mathbb Z\). Under the fixed-resolution condition \(kh\le s_0<\pi\) and \(kL\ge\pi\), the estimates yield a pollution-free convergence theory. Numerical experiments confirm the plane-wave exactness and the predicted convergence behavior, and show favorable fixed-resolution performance compared with standard and dispersion-corrected finite difference methods.
\end{abstract}

\keywords{Bernoulli Phase-Fitted method, Helmholtz problem, impedance boundary conditions, Scharfetter--Gummel discretization, wavenumber-explicit analysis, pollution-free convergence.}

\subjclass[2020]{35J05, 65N06, 65N12, 65N15.}

\maketitle

\section{Introduction}
The Helmholtz equation is a fundamental model for time-harmonic wave
propagation in acoustics, electromagnetics, and seismic imaging.
In the high-frequency regime, characterized by large wavenumbers $k$,
its solutions are highly oscillatory, and their accurate numerical
approximation remains a central challenge in scientific computing.
A well-known difficulty is the pollution effect \cite{babuska1997pollution,bayliss1985accuracy,DispersionPollution1999Deraemaeker,IHLENBURG19959,ihlenburg1997Finite}: 
For standard discretization methods, 
as the wavenumber $k$ increases, the numerical error cannot in general
be controlled solely by maintaining a fixed number of degrees of
freedom per wavelength. Consequently, to achieve a prescribed accuracy,
the total number of degrees of freedom must grow faster than the
natural scaling $O(k^d)$, where $d$ is the spatial dimension.

A large body of work has therefore been devoted to mitigating the
pollution effect in Helmholtz discretizations. In the context of finite element methods, high-order and $hp$-FEM techniques can substantially reduce pollution when the polynomial degree is increased together with mesh refinement
\cite{LAFONTAINE202259,melenk2010convergence,doi:10.1137/090776202}.
A particularly relevant class is Trefftz and wave-based methods, where the approximation spaces are built from local solutions of the
underlying differential equation and hence incorporate oscillatory
behavior directly
\cite{cessenat1998Application,hiptmair2011Plane,hiptmair2014Trefftz,hiptmair2016survey}.
These developments suggest that incorporating the local oscillatory structure of the Helmholtz operator can be important for high-frequency accuracy.


Within the finite difference framework, the same concern is often formulated in terms of numerical dispersion. Many approaches reduce the dispersion error by modifying the discrete operator, including optimized compact schemes \cite{CHEN20128152,CHENG20172345}, dispersion-minimizing schemes \cite{DASTOUR2021113544,Dastour2021,ref1,WU20182520}, shifted-wavenumber discretizations designed to match the exact phase velocity more accurately \cite{cocquet2017finite,cocquet2021closed}, and asymptotic dispersion corrections for general finite difference schemes \cite{cocquet2024asymptotic}. In one dimension, pollution-free finite difference constructions have also been studied in related settings \cite{wong2011exact,wang2014pollution}.
In this paper, we follow a different route: Rather than starting from a second-order stencil and then tuning its phase behavior, we build the discretization from the one-way propagation structure of the Helmholtz operator. In this way, the phase fitting is imposed on the one-way factors and leads directly to exactness for homogeneous plane-waves at the discrete level.

\subsection{From one-way factorization to phase-fitted fluxes}

We first describe the structural idea behind the method. 
The starting point is the observation that the
one-dimensional Helmholtz operator admits the factorization
\begin{align}\label{1.helm}
\partial_{xx}+k^2=(\partial_x+\ii k)(\partial_x-\ii k),
\end{align}
which separates the two one-way propagation components of the wave. In particular,
\begin{equation*}
   (\partial_{x} + \ii k) e^{-\ii kx}=0, \qquad (\partial_{x} - \ii k) e^{\ii kx}=0.
\end{equation*}
Thus the first-order operators annihilate outgoing and incoming plane-waves,
respectively. 
This suggests a different discretization strategy: rather than approximating the Helmholtz operator directly, one may first discretize the one-way factors in a way that preserves their plane-wave annihilation property.

To discretize these one-way operators, we draw inspiration from the
Scharfetter--Gummel (SG) discretization \cite{ScGu69}, which was
originally developed for semiconductor drift--diffusion models.
In one space dimension, a drift--diffusion equation can be written in the flux form
\begin{equation}\label{eqn:def-flux}
    \partial_t u = \partial_x F,
    \qquad F= \partial_x u -  v u = (\partial_x - v)u,
\end{equation}
where $v$ denotes the drift velocity.
The SG scheme approximates the flux at cell interfaces by locally solving the stationary flux equation, which leads
to 
\begin{equation}\label{SG in drift}
    F_{i+1/2}= \frac{1}{h}
    \big( B(h v) u_{i+1} - B(-h v) u_i\big).
\end{equation}
Here, $B(s)$ for $s\in\mathbb R$ is the Bernoulli function defined by
$B(s)=s/(e^s-1)$ for $s\neq 0$ and $B(0)=1$. 
This one-dimensional SG flux is local and edge-based. Hence, it is naturally compatible with multidimensional finite volume methods \cite{chainais-hillairet2003Finite,Bes12}, and incorporated into finite element frameworks through edge-averaged constructions
\cite{xu1999Monotone,lazarov2012exponential}. 

In this paper, we transfer this SG-flux idea from particle transport to wave propagation.  We replace the real drift $v$ in \eqref{eqn:def-flux} by the imaginary wavenumber $\pm \ii k$. Formally, the flux fitting \eqref{SG in drift} for $\partial_x - v$ becomes a phase fitting for the one-way factors $\partial_x\pm\ii k$ in Helmholtz operators.
In this way, we develop a new Bernoulli phase-fitted (BPF) finite difference method for the Helmholtz equation. The method preserves the one-way factorization structure
and local exactness for plane-waves.

This viewpoint follows the philosophy of the classical SG method: First, a one-dimension\-al fitted flux is derived, providing a natural building block for more general discretizations. The present paper proceeds in the same spirit and develops the one-dimensional Helmholtz analogue first. Extensions to multidimensional finite volume and finite element discretizations are natural. However, anisotropic numerical dispersion and boundary effects for Helmholtz problems become essential in several space dimensions. These questions are left
for future work. 
A central question, already in this one-dimensional setting, is whether this local exactness yields a reduction of the dispersion pollution in inhomogeneous Helmholtz problems. 
To answer this question, we prove wavenumber-explicit stability and pollution-free convergence estimates.

\subsection{Main results and contributions}

To make the factorized BPF construction fully transparent and to obtain
wavenumber-explicit estimates, we focus on the one-dimensional Helmholtz problem 
\begin{equation}\label{eq:Helmholtz}
u_{xx}(x) + k^2\,u(x) = f(x), \qquad x\in(0,L),
\end{equation}
with the impedance boundary conditions
\begin{equation}\label{eq:bc}
u_x(0) - \ii k\,u(0) = g_0,\qquad
u_x(L) + \ii k\,u(L) = g_L,
\end{equation}
where $g_0,g_L\in\mathbb C$ prescribe the incoming wave data at the two
boundaries.

For the homogeneous case $f(x)=0$, we show that the
proposed BPF scheme reproduces the
sampled plane-wave solutions exactly, both in the interior and at the
discrete boundary closures.
For the inhomogeneous problem, we first use the factorized structure of the scheme to derive stability bounds for all
$kh\notin \pi\mathbb Z$. We then exploit exact plane-wave lifting and Fourier-symbol estimates to control the consistency error. 
Combining these ingredients yields a wavenumber-explicit convergence analysis.

More precisely, the convergence estimate for the grid error $e$ takes the form of
\[
k\|e\|_{0,h}+|e|_{1,h}\le C(kh,kL)\, h^2,
\qquad kh\notin \pi\mathbb Z
\]
with the constant $C(kh,kL)$ given explicitly in terms of $kh$ and $kL$. 
In particular, under the conditions $kh\le s_0<\pi$ for some $s_0>0$ and $kL\ge\pi$,
the constant remains uniformly bounded with respect to the
wavenumber. Thus, the method admits a pollution-free $O(h^2)$ convergence estimate under fixed resolution. The advantage becomes clearer when we compare the wavenumber scaling: The dispersion-corrected scheme has an $O(k^2h^2)$ error \cite{cocquet2024asymptotic}, while the classical centered finite difference scheme introduces an $O(k^3h^2)$ error \cite{gander2025fourier}. When $kh$ is fixed, the latter two correspond to an $O(1)$ and an $O(k)$ pollution contribution. By contrast, the present estimate gives an $O(k^{-2})$ decay. This interesting behavior is also reflected in the numerical experiments.


The main contributions of this paper are as follows:
\begin{itemize}
\item We propose a BPF discretization for the Helmholtz equation, obtained by applying a complexified SG flux to the one-way factors of the operator. The scheme preserves the one-way factorization structure
and is locally exact on plane-waves.

\item For the one-dimensional impedance problem, we show that the BPF discretization reproduces homogeneous plane-waves exactly at the grid level, yielding an exact discrete impedance closure (Proposition \ref{lem:exact-bdry}).

\item We develop a wavenumber-explicit analysis for the one-dimensional inhomogeneous impedance problem, leading to stability, consistency, and pollution-free second-order convergence estimates (Theorems \ref{thm:main-stability}, \ref{thm:residual-bounds}, and \ref{thm:main-convergence}).

\item Numerical experiments confirm the plane-wave exactness and the predicted convergence behavior and show favorable performance compared with standard and dispersion-corrected finite difference schemes. 
A two-dimensional aligned plane-wave test further illustrates the directional
exactness, discussed in Remark~\ref{remark:directional-multidimension}.
\end{itemize}

\subsection{Organization of the paper}
The remainder of the paper is organized as follows.
In Section \ref{sec:model}, we derive the BPF discretization and prove its plane-wave exactness.  Section~\ref{sec:energy-error} is devoted to the well-posedness and
stability analysis of the discrete scheme. 
In Section~\ref{sec:consistency}, we establish consistency and convergence estimates.
Finally, numerical experiments are presented in Section \ref{sec:numerical-experiments}.


\section{Bernoulli phase-fitted discretization}\label{sec:model}

In this section, we derive the BPF finite
difference discretization for the Helmholtz equation, and prove its local exactness for homogeneous plane-waves at the discrete level.

\subsection{Grid notation}

We introduce the grid notation used for the one-dimensional impedance problem \eqref{eq:Helmholtz}--\eqref{eq:bc}. The interval $[0,L]$ is divided into $n$ uniform subintervals of length $h:=L/n$ with grid points $x_i:=ih$ for $i=0,1,\dots,n$.
For grid functions $v=\{v_i\}_{i=0}^n$ and $w=\{w_i\}_{i=0}^n$, we define
the discrete inner product and $L^2$ norm on the interior grid by
\[
(v,w)_h := h\sum_{i=1}^{n-1} v_i\overline{w_i},\qquad
\|v\|_{0,h}^2 := (v,v)_h,
\]
where $\overline{w_i}$ denotes complex conjugation.
The forward difference operator and the associated discrete $H^1$ seminorm are defined as
\[
(\nabla_h v)_i := \frac{v_{i+1}-v_i}{h},\qquad i=0,\dots,n-1,
\qquad
|v|_{1,h}^2 := h\sum_{i=0}^{n-1} |(\nabla_h v)_i|^2.
\]
Finally, the standard three-point approximation to the second derivative equals
\[
(\Delta_h v)_i := \frac{v_{i+1}-2v_i+v_{i-1}}{h^2},\qquad i=1,\dots,n-1.
\]

\subsection{Complexified Scharfetter--Gummel operators}\label{subsec:cSG}

To discretize the one-way operators in the Helmholtz
factorization \eqref{1.helm}, we apply a complexified
SG flux. We first extend the Bernoulli function at complex arguments $z\in\mathbb C$ by
\begin{equation*}
   B(z) = \frac{z}{e^z - 1} \quad\text{for $z \neq 0$}, \qquad\text{and}\qquad
       B(0) = 1.
\end{equation*}
The following useful properties are used later:
\begin{equation}\label{eq:basicIds}
B(-z)=e^{z}B(z),\qquad B(-z)-B(z) = z.
\end{equation}
We then
complexify the SG flux by replacing the real drift velocity $v$
in \eqref{SG in drift} with the imaginary wavenumber $\pm\ii k$.
This leads to the discretization of one-way operators $\partial_x\pm\ii k$:
\begin{equation*}
\begin{aligned}
    D_k^+u_i &:= \frac{1}{h}\!\Big(B(\ii kh)\,u_{i+1}-B(-\ii kh)\,u_i\Big), \\
    D_k^-u_i &:= \frac{1}{h}\!\Big(B(-\ii kh)\,u_{i}-B(\ii kh)\,u_{i-1}\Big).
\end{aligned}
\end{equation*}
These discrete operators annihilate the corresponding one-way
plane-waves exactly. Indeed, by \eqref{eq:basicIds} we have
\begin{equation}\label{eq:one-way-exactness}
   D_k^+e^{\ii kx_i}=0,\qquad D_k^-e^{-\ii kx_i}=0.
\end{equation}

\subsection{BPF finite difference scheme}\label{subsec:FD}

We now compose the one-way operators $D_k^+$ and $D_k^-$ introduced above.
Applying these operators to the Helmholtz equation
\eqref{eq:Helmholtz} together with the boundary conditions \eqref{eq:bc} leads to the discrete system
\begin{equation}\label{discretization SG}
    \left\{\begin{aligned}
    D_k^- D_k^+ u_i = f_i&, \qquad i=1,\dots,n-1,\\
    D_k^+u_0 = m(kh)\,g_0&, \qquad
    D_k^-u_n =m(kh)\, g_L,
    \end{aligned}\right.
\end{equation}
where $f_i := f(x_i)$. The complex-valued correction factor
\begin{equation}\label{def.m}
    m(s) =e^{-\ii s/2}\cos(s/2)
\end{equation}
is chosen in such a way that the discrete boundary operators reproduce
the exact plane-wave solutions; see
Proposition~\ref{lem:exact-bdry}.
Note that $m(s)\neq0$ for $s\in(0,\pi)$.

We further introduce the phase-fitted weight
\begin{equation*}
\Theta(s):=|B(\ii s)|^2 = B(\ii s)B(-\ii s),
\qquad s\in\mathbb{R},
\end{equation*}
which has the properties
\[
\Theta(s)= \frac{s^2}{4\sin^2(s/2)},
\quad s\notin 2 \pi\mathbb Z,
\quad\text{and}\quad 
1\le \Theta(s) \le \frac{\pi^2}{4},
\quad 0\le s\le \pi.
\]
A direct algebraic calculation (see
Proposition~\ref{prop:factorization} in Appendix~A)
shows that the composition $D_k^-D_k^+$ in \eqref{discretization SG} can be written in the three-point form
\begin{equation}\label{eq:exponential-fitted}
\Theta(kh)\,(\Delta_h u)_i + k^2 u_i = f_i.
\end{equation}
This representation shows that the BPF discretization
is related to a three-point finite difference scheme with a shifted wavenumber $\widehat k = k/\sqrt{\Theta(kh)}$; see \cite{ernst2013multigrid,wong2011exact}, together with a scaled source term $f_i/\Theta(kh)$.
This source scaling is a structural consequence of the factorized BPF construction, and is absent in the corresponding shifted-wavenumber or dispersion-corrected discretization \cite{cocquet2024asymptotic}. For inhomogeneous problems, it removes the leading $O(k^2h^2)$ consistency contribution and leads to the $O(h^2)$ error estimate proved in this work.

\subsection{Exactness for plane-waves}\label{subsec:symbol}

Now we prove that the BPF discretization reproduces the exact plane-wave solutions of the homogeneous Helmholtz equation. Indeed, consider the homogeneous problem ($f\equiv 0$).
Its general solution on $(0,L)$ is
\begin{equation}\label{homogeneous interior}
u(x)=\alpha\,e^{\ii k x}+\beta\,e^{-\ii k x},\qquad \alpha,\beta\in\mathbb C.
\end{equation}
The coefficients $\alpha$ and $\beta$ are determined by the impedance boundary conditions \eqref{eq:bc}:
\begin{equation}\label{eq:alpha-beta}
    -2\ii k\,\beta = g_0,
    \qquad
    2\ii k\,e^{\ii kL}\alpha = g_L.
\end{equation}

\begin{proposition}[Exactness for plane-waves]\label{lem:exact-bdry}
Let $f\equiv0$ and consider the con\-tin\-uous solution
\eqref{homogeneous interior}.
Define the sampled sequence $u_i:=u(x_i)$.
Then it solves
\begin{equation*}
D_k^- D_k^+u_i=0, \quad
(D_k^+u)_0=m(kh)g_0,\quad (D_k^-u)_n=m(kh)g_L,
\end{equation*}
where $i=1,\dots,n-1$, recalling definition \eqref{def.m} of $m(s)$.
\end{proposition}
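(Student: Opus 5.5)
By linearity it suffices to treat the two plane waves $e^{\ii kx}$ and $e^{-\ii kx}$ separately. The key tools are the one-way exactness \eqref{eq:one-way-exactness} and the identities \eqref{eq:basicIds}.

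\textbf{Interior equations.} First I will show that $D_k^-$ and $D_k^+$ commute as operators on grid functions; both are constant-coefficient shift operators, so this holds. Equivalently, Proposition~\ref{prop:factorization} gives the symmetric form \eqref{eq:exponential-fitted}. Then:
\begin{itemize}
\item For $e^{\ii kx_i}$, we have $D_k^+e^{\ii kx_i}=0$ at every index, so $D_k^-D_k^+e^{\ii kx_i}=0$.
\item For $e^{-\ii kx_i}$, commuting gives $D_k^+D_k^-e^{-\ii kx_i}=0$.
\end{itemize}
A more direct route is to verify \eqref{eq:exponential-fitted} with $f=0$. We have $(\Delta_h e^{\pm\ii kx})_i = -\frac{4\sin^2(kh/2)}{h^2}e^{\pm\ii kx_i}$. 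Multiplying by $\Theta(kh)=k^2h^2/(4\sin^2(kh/2))$ gives exactly $-k^2e^{\pm\ii kx_i}$, which cancels the $k^2u_i$ term. I will present this route, since it needs only the stated closed form of $\Theta$.

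\textbf{Left boundary.} Since $D_k^+e^{\ii kx_0}=0$, only the $\beta$ term contributes:
$$(D_k^+u)_0=\frac{\beta}{h}\big(B(\ii kh)e^{-\ii kh}-B(-\ii kh)\big).$$
Using $B(-z)=e^zB(z)$ with $z=\ii kh$, this equals
$$\frac{\beta}{h}B(\ii kh)\big(e^{-\ii kh}-e^{\ii kh}\big)=-\frac{2\ii\beta}{h}\sin(kh)\,B(\ii kh).$$
Set $s=kh$. From
$$B(\ii s)=\frac{\ii s}{e^{\ii s}-1}=\frac{\ii s}{e^{\ii s/2}\,2\ii\sin(s/2)}=\frac{s\,e^{-\ii s/2}}{2\sin(s/2)},$$
we obtain
$$\sin(s)\,B(\ii s)=s\,e^{-\ii s/2}\cos(s/2)=s\,m(s).$$
Hence $(D_k^+u)_0=-2\ii k\beta\, m(kh)=m(kh)g_0$ by \eqref{eq:alpha-beta}.

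\textbf{Right boundary.} Now $D_k^-e^{-\ii kx_n}=0$, so only the $\alpha$ term contributes:
$$(D_k^-u)_n=\frac{\alpha e^{\ii kL}}{h}\big(B(-\ii kh)-B(\ii kh)e^{-\ii kh}\big)=\frac{\alpha e^{\ii kL}}{h}B(\ii kh)\big(e^{\ii kh}-e^{-\ii kh}\big).$$
The same identity gives $2\ii k\, e^{\ii kL}\alpha\, m(kh)=m(kh)g_L$.

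\textbf{Main obstacle.} The only delicate step is the Bernoulli identity $\sin(s)B(\ii s)=s\,m(s)$. This is a short half-angle computation. It requires $kh\notin 2\pi\mathbb Z\setminus\{0\}$ so that $B(\ii kh)$ is defined; the statement's implicit assumption $kh\notin\pi\mathbb Z$ covers this.
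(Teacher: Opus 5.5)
Your proof is correct and follows the paper's route. The boundary closures are computed the same way as in the paper: you use $B(-\ii kh)=e^{\ii kh}B(\ii kh)$ and the half-angle identity $\sin(s)B(\ii s)=s\,m(s)$ (the paper's \eqref{eq:key-id}), and the annihilated plane wave drops out at each end. For the interior, the paper only cites linearity and one-way exactness, which for the $e^{-\ii kx}$ term tacitly needs $D_k^-$ and $D_k^+$ to commute; your commutation remark, or your direct check via the three-point form \eqref{eq:exponential-fitted}, makes that step explicit.
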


\begin{proof}
We recall that the discrete one-way operators annihilate
the corresponding plane-waves; see \eqref{eq:one-way-exactness}. Hence,  for the general solution \eqref{homogeneous interior},
linearity yields
\[
(D_k^-D_k^+u)_i
=
\alpha D_k^-D_k^+e^{\ii kx}
+
\beta D_k^-D_k^+e^{-\ii kx}
=0 ,
\]
which proves the interior equation. We infer from \eqref{eq:rewrite D+} and \eqref{eq:key-id} that
\[
(D_k^+ e^{-\ii kx})_0
= 
\frac{B(\ii kh)}{h}\big(e^{-\ii kh} - e^{\ii kh}\big)
=
-\frac{2\ii}{h}B(\ii kh)\sin(kh)
=
-2\ii k\,m(kh),
\]
and similarly $(D_k^- e^{\ii kx})_n = 2\ii k\,m(kh)e^{\ii kL}$. We substitute \eqref{eq:alpha-beta} to obtain $(D_k^+u)_0=m(kh)g_0$ and $(D_k^-u)_n=m(kh)g_L$, finishing the proof.
\end{proof}

As a consequence, the BPF discretization introduces no numerical dispersion for homogeneous plane-waves in the interior. Moreover, in the impedance setting, the boundary closure produces no artificial reflection.

\begin{remark}[Dirichlet data]
For Dirichlet boundary conditions, the corresponding exactness is immediate.
Indeed, if \(u(x)=\alpha e^{\ii kx}+\beta e^{-\ii kx}\) and the boundary values are
prescribed by its trace, then the sampled values \(u_i=u(x_i)\) satisfy
\[
D_k^-D_k^+u_i=0,\qquad i=1,\ldots,n-1,
\]
together with the exact boundary data. 
\end{remark}

\begin{remark}[Directional exactness in several dimensions]
\label{remark:directional-multidimension}
The exactness mechanism above is local and factorized, and it has a
directional analogue in several space dimensions. Define
\(\mathbf{k}=(k_1,\ldots,k_d)\in\mathbb R^d\) with
\(|\mathbf{k}|=k\). The Helmholtz operator admits
the directional factorization
\[
\Delta+k^2=(\nabla+\ii\mathbf{k})\cdot(\nabla-\ii\mathbf{k}).
\]
It suggests applying the one-dimensional complexified SG flux in each
coordinate direction with the corresponding component \(k_j\). This gives the directional BPF interior operator
\[
\mathcal L_h^{\mathbf{k}}u
=
\sum_{j=1}^d D_{j,k_j}^-D_{j,k_j}^+u,
\]
where \(D_{j,k_j}^{\pm}\) denotes the one-dimensional BPF one-way
operator in the \(x_j\)-direction. It follows from Proposition~\ref{lem:exact-bdry}, applied in each direction, that the sampled aligned plane-wave \(u(\mathbf{x})=e^{\ii\mathbf{k}\cdot\mathbf{x}}\)
satisfies $\mathcal L_h^{\mathbf{k}}u=0$ at all interior grid points. Thus, the directional extension preserves exactness
for aligned plane-waves.
This indicates that the BPF-type construction is local and directional; a full multidimensional discretization
and analysis are left for future work.
\end{remark}

\section{Stability and well-posedness}\label{sec:energy-error}

We establish the main stability result for the BPF scheme and prove the well-posedness of the discrete problem.
Throughout this section, we assume that $kh\notin \pi\mathbb Z$,
so that the Nyquist degeneracy is avoided and
for brevity, we write $\Theta:=\Theta(kh) = |B(\ii kh)|^2$.

Our goal is to derive estimates that are explicit with respect to the
wavenumber and remain uniformly bounded under a fixed resolution
constraint.

\begin{theorem}[Well-posedness and $k$-explicit stability]
\label{thm:main-stability}
Let $u_h=\{u_i\}_{i=0}^n$ denote the solution to the BPF scheme
\eqref{discretization SG}.
For $s:=kh$ and $ t:=kL$,
define
\begin{equation}\label{eq:def-const-A0}
A_0(s,t)
:= \frac{L}{\sqrt{2\Theta(s)}}\,\sec\!\Big(\frac{s}{2}\Big)
   +\frac{L}{2t}\,\sec^2\!\Big(\frac{s}{2}\Big),
\end{equation}
Then, for any $kh\notin \pi\mathbb Z$, and for any source $f$ and boundary data $g_0,g_L\in\mathbb C$, the BPF
scheme admits a unique discrete solution satisfying
\begin{align}
k\|u_h\|_{0,h}
&\le
A_0(kh,kL)\,\|f\|_{0,h}
+\frac{\sqrt L}{2}\bigl(|g_0|+|g_L|\bigr),
\label{eq:main-L2-general}
\\
\sqrt{\Theta}\,|u_h|_{1,h}
&\le
A_0(kh,kL)\,\|f\|_{0,h}
+\frac{\sqrt L}{2}\bigl(|g_0|+|g_L|\bigr).
\label{eq:main-H1-general}
\end{align}
In particular, for any fixed $s_0<\pi$, the quantities
$A_0(kh,kL)$ remains uniformly bounded with respect to
$k$ whenever $kh\le s_0$ and $kL\ge \pi$.
\end{theorem}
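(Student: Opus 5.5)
We will prove the stability bounds by an energy argument built on the factorized form $D_k^-D_k^+$. Uniqueness and existence will then follow, because the system is square and linear. Write $w_i:=(D_k^+u)_i$ for $i=0,\dots,n-1$, and define $w$ at the right end through the boundary closure. Using the identity $B(-\ii s)=e^{\ii s}B(\ii s)$, we have $D_k^+u_i = \frac{B(\ii s)}{h}\big(u_{i+1}-e^{\ii s}u_i\big)$. Hence $D_k^+$ is a $B(\ii s)$-scaled, phase-twisted forward difference. Likewise $D_k^-$ is a phase-twisted backward difference.

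The first step is to test the interior equation $D_k^-D_k^+u_i=f_i$ against $\overline{u_i}$, sum with weight $h$, and perform discrete summation by parts. Equivalently, we use the three-point form \eqref{eq:exponential-fitted}, $\Theta\Delta_h u+k^2u=f$, together with the boundary rows rewritten via \eqref{discretization SG}. This gives an identity of the form
\[
-\Theta|u_h|_{1,h}^2 + k^2\|u_h\|_{0,h}^2 + \text{(boundary terms in } u_0,u_n) = (f,u_h)_h .
\]
The boundary closures $D_k^+u_0=m g_0$ and $D_k^-u_n=mg_L$ express $(u_1-u_0)/h$ and $(u_n-u_{n-1})/h$ as $\ii k$-type multiples of $u_0,u_n$ plus data. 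The imaginary part of the identity therefore yields control of the form $k\,c(s)(|u_0|^2+|u_n|^2)\lesssim |(f,u)_h|+|g_0||u_0|+|g_L||u_n|$. Here the constant $c(s)$ involves $\cos^2(s/2)$, which is presumably the source of the $\sec$ and $\sec^2$ factors in $A_0$.

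The second step is to obtain the $L^2$ bound, which is the main obstacle. The plain energy identity is indefinite, so the real part alone is insufficient. I would use a discrete Rellich/Morawetz multiplier: test the equation with $(x_i-L/2)$ times a centered difference of $\overline{u}$, or with the analogous one-way multiplier $\overline{(D_k^+u)}$ weighted by $x$. This is the discrete analogue of the standard 1D proof that gives $k\|u\|\le CL\|f\|$. Because of the factorization, a cleaner route may be to use the discrete Green's function explicitly. The scheme reduces to $\Theta\Delta_h+k^2$ with shifted wavenumber $\hat k$ satisfying $\cos(\hat k h)$-type dispersion exactly matched to $k$. The discrete Green's function with exact impedance closure is then $G_{ij}\propto e^{\ii k|x_i-x_j|}/(2\ii\,\cdot)$ with denominator $\frac{\Theta}{h}\cdot 2\sin(kh)\cdot(\ldots)$, i.e. $\sim 2\ii k\,\Theta\,\mathrm{sinc}$-factors. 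Its modulus is bounded by $1/(2k\sqrt{\Theta}\cos(s/2))$-type constants. A Schur/Cauchy--Schwarz bound on $G$ then gives $k\|u\|_{0,h}\le \frac{L}{\sqrt{2\Theta}}\sec(s/2)\|f\|_{0,h}$. Similarly, the homogeneous plane-wave part from Proposition~\ref{lem:exact-bdry}, with data divided by $m$, gives the $\frac{\sqrt L}{2}(|g_0|+|g_L|)$ term. I expect to commit to this Green's-function route, since the exactness of plane waves makes $G$ explicit. The extra term $\frac{L}{2t}\sec^2(s/2)$ would arise from the boundary correction needed because the discrete closure involves $m(s)$ and a factor $\cos^{-2}(s/2)$.

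The third step is the $H^1$ bound. Taking the real part of the energy identity gives $\Theta|u_h|_{1,h}^2\le k^2\|u_h\|_{0,h}^2+|(f,u)_h|+\text{boundary terms}$. Inserting the $L^2$ bound and the boundary control from the first step, and absorbing the remaining terms with Young's inequality, yields \eqref{eq:main-H1-general} with the same right-hand side, up to careful bookkeeping. Alternatively, we can difference the Green's-function representation directly. Finally, we verify uniform boundedness. For $s\le s_0<\pi$ we have $\Theta\ge1$ and $\sec(s/2)\le\sec(s_0/2)$, and $t\ge\pi$ gives $L/(2t)\le L/(2\pi)$, so $A_0$ is bounded independently of $k$.
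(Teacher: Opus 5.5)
Your plan is correct in substance, but it differs from the paper at the decisive step. The common part is the reduction: both arguments subtract the exact plane-wave lifting of the boundary data (Proposition~\ref{lem:exact-bdry}). This lifting is bounded by $\frac{\sqrt L}{2}(|g_0|+|g_L|)$ in both norms, and what remains is a solution $w$ of the homogeneous-radiation problem \eqref{eq:bpf-hom-bc}.

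For $w$, the paper never inverts the second-order operator. It solves only the first-order factor explicitly, via the recursion for $D_k^+u$ in Lemma~\ref{lem:flux-estimate}. It then removes the indefinite $\cos(kh)$ term by combining the flux--energy identity (Lemma~\ref{lem:flux-energy}) with the real part of the energy identity (Lemma~\ref{lem:complex-energy}). The summand $\frac{L}{2t}\sec^2(s/2)$ in $A_0$ comes from Young's inequality on $\Re(f,u)_h$ in that elimination, not from the boundary closure as you guessed.

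Your full discrete Green's function also works, once the sign is fixed. Since $D_k^+e^{\ii kx_i}=0$ and the left closure is $D_k^+u_0=0$, the kernel must be $G_{ij}=C\,e^{-\ii k|x_i-x_j|}$; with $e^{+\ii k|x_i-x_j|}$ both closures fail. The diagonal row together with $\Theta(s)(2\cos s-2)=-s^2$ gives $C=\ii h/(2\Theta\sin s)$. Hence $|G_{ij}|=1/(2k\sqrt\Theta\,|\cos(s/2)|)$ and $|G_{i+1,j}-G_{ij}|/h=1/(2\Theta|\cos(s/2)|)$ for all $i,j$.

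The Schur bound then gives that both $k\|w\|_{0,h}$ and $\sqrt\Theta|w|_{1,h}$ are at most $\frac{L}{2\sqrt\Theta}|\sec(s/2)|\,\|f\|_{0,h}\le A_0\|f\|_{0,h}$. Here $\sec$ is read as $|\sec|$ for $kh>\pi$, as the paper's own square-root step implicitly does. The triangle inequality with the lifting then gives both claimed bounds.

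With this route, your first step (the imaginary part) and the Rellich multiplier are superfluous. The $H^1$ bound should come from differencing $G$ on $w$ alone. Your energy-identity version on the full problem, with absolute-value bounds on the $g$-dependent boundary terms, would overshoot the data constant $\frac{\sqrt L}{2}$: the lifting alone already attains it in $|\cdot|_{1,h}$ when $f=0$ and $g_0=0$. Well-posedness follows because this construction solves the square system for every datum, using $m(kh)\neq0$ for $kh\notin\pi\mathbb Z$.

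Your route is more elementary and sharper. It has no $L/(2t)$ term, so uniform boundedness needs only $kh\le s_0$, not $kL\ge\pi$. However, it rests on an explicit constant-coefficient one-dimensional kernel. The paper's flux recursion and summation-by-parts identities give weaker constants but avoid an explicit inverse. That makes them the more plausible template for the multidimensional extensions the authors have in mind.
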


The condition $kh\le s_0<\pi$ corresponds to the principal Nyquist regime, which admits a number of more than two grid points per wavelength (PPW),
while $kL\ge\pi$ means that the interval $(0,L)$ contains at least half
a wavelength. 

The proof of Theorem~\ref{thm:main-stability} is based on two key
ingredients.
First, we derive stability estimates for the discrete problem with
homogeneous discrete radiation conditions.
Second, we exploit the exactness result of
Proposition~\ref{lem:exact-bdry}, which shows that the BPF scheme
reproduces the homogeneous plane-wave component exactly.
This allows us to treat general boundary data by an exact lifting
argument.

\subsection{Reduction to homogeneous discrete radiation conditions}

We first consider the BPF discretization with homogeneous discrete
radiation conditions,
\begin{equation}\label{eq:bpf-hom-bc}
\left\{
\begin{aligned}
D_k^-D_k^+u_i &= f_i, \qquad i=1,\dots,n-1,\\
(D_k^+u)_0 &= 0,\qquad
(D_k^-u)_n = 0.
\end{aligned}
\right.
\end{equation}
This reduced problem contains the essential stability mechanism of the
scheme. We isolate \eqref{eq:bpf-hom-bc} since the contribution
of the boundary data can be represented exactly by plane-waves.
Indeed, Proposition~\ref{lem:exact-bdry} shows that the homogeneous
Helmholtz equation with impedance boundary data is reproduced exactly
by the discrete BPF scheme at the nodal level.
Consequently, the full discrete problem with general boundary data can
be reduced to \eqref{eq:bpf-hom-bc} by subtracting an exact plane-wave
lifting.

We therefore first establish $k$-explicit stability estimates for
\eqref{eq:bpf-hom-bc}. The lifting argument and the proof of the full
stability theorem will be given in
Section~\ref{sec:energy-error-general}.

\subsection{Basic flux and energy identities}

We derive the basic estimates for the homogeneous-boundary problem
\eqref{eq:bpf-hom-bc}.
Since the one-way fluxes $D_k^+u$ and $D_k^-u$ are naturally defined on
staggered index sets, we slightly abuse the notation and write
\[
\|D_k^+u\|_{0,h}^2 := h\sum_{i=0}^{n-1}|D_k^+u_i|^2,
\qquad
\|D_k^-u\|_{0,h}^2 := h\sum_{i=1}^{n}|D_k^-u_i|^2.
\]

We start with $L^2$ bounds for the discrete one-way fluxes associated
with the homogeneous discrete radiation problem. 

\begin{lemma}[Flux estimate]
\label{lem:flux-estimate}
Let $u=\{u_i\}_{i=0}^n$ solve \eqref{eq:bpf-hom-bc}. Then
\begin{equation}
\label{eq:flux-estimate-both}
\|D_k^+u\|_{0,h}^2+\|D_k^-u\|_{0,h}^2
\le
\frac{L^2}{\Theta}\,\|f\|_{0,h}^2.
\end{equation}
\end{lemma}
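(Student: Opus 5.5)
The plan is to reduce the second-order scheme to two first-order recursions for the fluxes and solve them by discrete variation of constants. Set $w_i:=D_k^+u_i$ for $i=0,\dots,n-1$ and $v_i:=D_k^-u_i$ for $i=1,\dots,n$.

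\textbf{Recursion for $w$.} By definition of $D_k^-$, the interior equation reads
\[
\tfrac1h\bigl(B(-\ii kh)\,w_i-B(\ii kh)\,w_{i-1}\bigr)=f_i,\qquad i=1,\dots,n-1,
\]
with initial value $w_0=0$. By \eqref{eq:basicIds}, $B(\ii kh)/B(-\ii kh)=e^{-\ii kh}$, which has modulus one. Hence
\[
w_i=e^{-\ii kh}\,w_{i-1}+\frac{h f_i}{B(-\ii kh)} .
\]
Since $|B(-\ii kh)|^2=\Theta$ and $w_0=0$, unrolling the recursion and applying Cauchy--Schwarz gives
\[
|w_i|\le \frac{h}{\sqrt\Theta}\sum_{j=1}^{i}|f_j|,
\qquad
|w_i|^2\le \frac{x_i}{\Theta}\,h\sum_{j=1}^{i}|f_j|^2 .
\]

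\textbf{Recursion for $v$.} The operators $D_k^\pm$ are constant-coefficient combinations of shifts, so they commute. A direct index check shows $D_k^+D_k^-u_i=D_k^-D_k^+u_i=f_i$ for $i=1,\dots,n-1$; each side involves only $u_{i-1},u_i,u_{i+1}$. Thus
\[
B(\ii kh)\,v_{i+1}-B(-\ii kh)\,v_i=h f_i ,
\]
with terminal value $v_n=0$. Solving backward gives $v_i=e^{-\ii kh}v_{i+1}-hf_i/B(-\ii kh)$. Exactly as before,
\[
|v_i|^2\le \frac{L-x_i}{\Theta}\,h\sum_{j=i}^{n-1}|f_j|^2 .
\]

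\textbf{Summation.} Multiply the two pointwise bounds by $h$, sum over $i$, and interchange the order of summation. Each $|f_j|^2$ is then multiplied by
\[
\frac{h^2}{\Theta}\sum_{i=j}^{n-1}x_i
\quad\text{and}\quad
\frac{h^2}{\Theta}\sum_{i=1}^{j}(L-x_i),
\]
respectively. Each of these is at most
\[
\frac{h^2}{\Theta}\sum_{i=1}^{n-1} ih=\frac{h^2\,n(n-1)}{2\Theta}\le \frac{L^2}{2\Theta}.
\]
Adding the $w$ and $v$ contributions then yields \eqref{eq:flux-estimate-both}.

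The delicate point is the bookkeeping of the staggered index ranges. One must verify that the commuted identity $D_k^+D_k^-u=f$ holds on exactly $i=1,\dots,n-1$. One must also check that $w_{n-1}$ and $v_1$ are covered, with no extra boundary terms, given that the recursions use only $w_0=0$ and $v_n=0$. If the crude bound $L^2/2$ per flux turned out to be slightly off because of endpoint terms, I would instead use the sharper combination $x_j$-weights. These sum to $\tfrac12\bigl(L^2-x_j^2\bigr)+\tfrac12\bigl(L^2-(L-x_j)^2\bigr)\le L^2$, and that would close the estimate.
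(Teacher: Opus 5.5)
Your proof is correct and follows the paper's argument. You use a forward recursion for $w=D_k^+u$ from $w_0=0$ and a backward recursion for $v=D_k^-u$ from $v_n=0$, the latter justified by $D_k^+D_k^-u_i=D_k^-D_k^+u_i$ (the first line of the computation in Proposition~\ref{prop:factorization}), which makes the paper's ``completely analogous'' step explicit; Cauchy--Schwarz and summation then give $L^2/(2\Theta)$ for each flux, which is what the sum requires (the paper's displayed bound $L/\sqrt{\Theta}$ for $\|D_k^-u\|_{0,h}$ is evidently a typo for $L/\sqrt{2\Theta}$). One bookkeeping slip: $\frac{h^2}{\Theta}\sum_{i=1}^{n-1}ih=\frac{h^3n(n-1)}{2\Theta}\le\frac{hL^2}{2\Theta}$, so $L^2/(2\Theta)$ bounds the weight of $h|f_j|^2$ rather than of $|f_j|^2$, and this is exactly what produces $\|f\|_{0,h}^2$.
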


\begin{proof}
Set $w_i:=(D_k^+u)_i$ for $i=0,\dots,n-1$.
We deduce from $D_k^-D_k^+u=f$ and the definition of $D_k^-$ that
\[
B(-\ii kh)\,w_i-B(\ii kh)\,w_{i-1}=h f_i,
\qquad i=1,\dots,n-1.
\]
Using the identity $B(\ii kh)=e^{-\ii kh}B(-\ii kh)$ yields the
recurrence
\[
w_i=e^{-\ii kh}w_{i-1}+\frac{h}{B(-\ii kh)}\,f_i,
\qquad i=1,\dots,n-1.
\]
Since $(D_k^+u)_0=0$, an iteration gives
\begin{equation*}
w_i=\frac{h}{B(-\ii kh)}\sum_{j=1}^i e^{-\ii kh(i-j)}\,f_j,
\qquad i=1,\dots,n-1.
\end{equation*}
Taking absolute values and using $|e^{-\ii kh(i-j)}|=1$ and
$|B(-\ii s)|^2=\Theta(s)$, we find that
\begin{align*}
|w_i|
&\le \frac{h}{\sqrt{\Theta}}\sum_{j=1}^i |f_j|
\le \frac{h}{\sqrt{\Theta}}\sqrt{i}
   \bigg(\sum_{j=1}^i |f_j|^2\bigg)^{1/2} 
\le \frac{\sqrt{ih}}{\sqrt{\Theta}}\,\|f\|_{0,h}.
\end{align*}
It follows from $\sum_{i=0}^{n-1} i \le n^2/2 $ and $nh=L$ that
\begin{equation}\label{eq:Dplus-L2-bound}
\|D_k^+u\|_{0,h}=\|w\|_{0,h}
\le \frac{L}{\sqrt{2\Theta}}\,\|f\|_{0,h}.
\end{equation}
A completely analogous backward recursion, starting from
$(D_k^-u)_n=0$ leads to
\begin{equation}\label{eq:Dminus-L2-bound}
\|D_k^-u\|_{0,h}
\le \frac{L}{\sqrt{\Theta}}\,\|f\|_{0,h}.
\end{equation}
Combining \eqref{eq:Dplus-L2-bound} and \eqref{eq:Dminus-L2-bound} gives \eqref{eq:flux-estimate-both}.
\end{proof}


The next result relates the discrete one-way fluxes to the standard
discrete $H^1$ norm and the boundary data.

\begin{lemma}[Flux--energy relation]\label{lem:flux-energy}
For any $u=\{u_i\}_{i=0}^n$ and all $kh\notin\pi\mathbb Z$,
\begin{equation}\label{eq:flux-energy}
\|D_k^+u\|_{0,h}^2+\|D_k^-u\|_{0,h}^2
=
2\Theta\cos(kh)\,|u|_{1,h}^2
+2k^2\,\|u\|_{0,h}^2 + k^2h\big(|u_{n}|^2 + |u_0|^2\big).
\end{equation}
\end{lemma}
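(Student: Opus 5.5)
The plan is a direct algebraic computation: rewrite both one-way fluxes in terms of a common prefactor $B(\ii kh)$, then pair the two terms living on the same edge. By \eqref{eq:basicIds} we have $B(-\ii s)=e^{\ii s}B(\ii s)$ with $s:=kh$. Hence
\[
(D_k^+u)_i=\frac{B(\ii s)}{h}\big(u_{i+1}-e^{\ii s}u_i\big),\qquad
(D_k^-u)_{i+1}=\frac{e^{\ii s}B(\ii s)}{h}\big(u_{i+1}-e^{-\ii s}u_i\big).
\]
Since $|e^{\ii s}|=1$ and $|B(\ii s)|^2=\Theta$, this gives
$|(D_k^+u)_i|^2=\frac{\Theta}{h^2}|u_{i+1}-e^{\ii s}u_i|^2$ and $|(D_k^-u)_{i+1}|^2=\frac{\Theta}{h^2}|u_{i+1}-e^{-\ii s}u_i|^2$.

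The index sets $\{0,\dots,n-1\}$ for $D_k^+$ and $\{1,\dots,n\}$ for $D_k^-$ both correspond to the $n$ edges $(x_i,x_{i+1})$. I will therefore sum edge by edge. On the edge $(x_i,x_{i+1})$, write $a=u_{i+1}$ and $c=u_i$. Expanding the squares gives
\[
|a-e^{\ii s}c|^2+|a-e^{-\ii s}c|^2=2|a|^2+2|c|^2-4\cos s\,\mathrm{Re}(a\bar c).
\]
This is the key step: it rearranges into a difference part plus a nodal part,
\[
2\cos s\,|a-c|^2+2(1-\cos s)\big(|a|^2+|c|^2\big),
\]
which one checks using $|a-c|^2=|a|^2+|c|^2-2\mathrm{Re}(a\bar c)$.

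Multiplying by $h\cdot\Theta/h^2$ and summing over the edges, the first part produces $2\Theta\cos(kh)\,|u|_{1,h}^2$. For the second part, I will use
\[
\Theta(1-\cos s)=2\Theta\sin^2(s/2)=s^2/2,
\]
which follows from the closed form of $\Theta$ for $s\notin2\pi\mathbb Z$. This gives the coefficient $\frac{2\Theta(1-\cos s)}{h}=\frac{s^2}{h}=k^2h$ in front of $\sum_{i=0}^{n-1}(|u_i|^2+|u_{i+1}|^2)$. In that sum each interior node is counted twice and each endpoint once, so it equals $2\sum_{i=1}^{n-1}|u_i|^2+|u_0|^2+|u_n|^2$. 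Multiplying by $k^2h$ yields $2k^2\|u\|_{0,h}^2+k^2h(|u_0|^2+|u_n|^2)$, which is exactly \eqref{eq:flux-energy}.

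There is no real obstacle here; the only points needing care are bookkeeping. First, the staggered index sets must be aligned so that the two fluxes are paired on the same edge. Second, the endpoint terms must be tracked, and these are what create the boundary contribution. Third, the trigonometric identity must cancel $\Theta$ and $h$ exactly in the nodal term. The identity holds for every grid function and needs no equation; the assumption $kh\notin\pi\mathbb Z$ only guarantees that $\Theta$ is given by the closed form. (At $s\in2\pi\mathbb Z\setminus\{0\}$ the Bernoulli function has a pole, so that case is excluded anyway.)
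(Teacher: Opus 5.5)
Your proof is correct and follows essentially the same route as the paper: you pair $|D_k^+u_i|^2$ with $|D_k^-u_{i+1}|^2$ edge by edge, expand the squared moduli, and close with $\Theta(s)(1-\cos s)=s^2/2$, which is equivalent to the paper's $2\Theta(s)-s^2=2\Theta(s)\cos s$. The only cosmetic difference is that you factor out $B(\ii kh)$ via $B(-z)=e^{z}B(z)$, whereas the paper writes the fluxes as $B(\ii kh)\nabla_h u_i\mp\ii k u$ via $B(-z)-B(z)=z$ and evaluates the cross term directly.
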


\begin{proof}
Using the identity $B(-z)-B(z)=z$, we rewrite
\begin{align}\label{eqn:rewrite D_k+}
    (D_k^+u)_i
    &= \frac{B(\ii kh)\,u_{i+1} - B(-\ii kh)\,u_i}{h}
    = B(\ii kh)\nabla_h u_i - \ii k\, u_i, \\
    \label{eqn:rewrite D_k-}
    (D_k^-u)_{i+1}
    &= \frac{B(-\ii kh)\,u_{i+1} - B(\ii kh)\,u_i}{h}
    = B(\ii kh)\nabla_h u_i + \ii k\, u_{i+1}.
\end{align}
Hence, it follows that
\begin{align*}
    |D_k^+u_i|^2 + |D_k^-u_{i+1}|^2
    &= 2|B(\ii kh)|^2\,|\nabla_h u_i|^2
       + k^2\,\big(|u_{i+1}|^2+|u_i|^2\big)  + T_{\rm cross},
\end{align*}
where the cross terms are collected into 
\begin{align*}
    T_{\rm cross}
    &=
    2\Re\Big(
B(\ii kh)\,\nabla_h u_i \cdot \overline{\ii kh(\nabla_h u_i)}
\Big)
=
\ii kh\Big(
B(-\ii kh) -B(\ii kh)
\Big) |\nabla_h u_i|^2
\end{align*}
Using $B(-z)-B(z)=z$ again,
this gives
\begin{equation*}
|D_k^+u_i|^2+|D_k^-u_{i+1}|^2
=
\big(2|B(\ii kh)|^2-k^2h^2\big)|\nabla_h u_i|^2
+ k^2\big(|u_{i+1}|^2 + |u_i|^2\big).
\end{equation*}
Since $|B(\ii s)|^2=\Theta(s)$ and
\[
2\Theta(s)-s^2
=
\frac{s^2}{2\sin^2(s/2)}-s^2
=
\frac{s^2\cos s}{2\sin^2(s/2)}
=
2\Theta(s)\cos s,
\]
we obtain
\[
|D_k^+u_i|^2+|D_k^-u_{i+1}|^2
=
2\Theta(kh)\cos(kh)\,|\nabla_h u_i|^2
+k^2\big(|u_{i+1}|^2 + |u_i|^2\big).
\]
Summing over $i=0,\dots,n-1$ and multiplying by $h$ yields
\eqref{eq:flux-energy}.
\end{proof}

As a byproduct of Lemmas~\ref{lem:flux-estimate}
and~\ref{lem:flux-energy}, we have the following estimate.
\begin{corollary}[Auxiliary energy bound]
\label{rem:low-frequency-coercive}
Let $u$ solve \eqref{eq:bpf-hom-bc}. For all $kh\notin\pi\mathbb Z$,
\begin{equation}
\label{eq:low-frequency-coercive}
\Theta\cos(kh)\,|u|_{1,h}^2
+
k^2\|u\|_{0,h}^2
+
\frac{k^2h}{2}\bigl(|u_0|^2+|u_n|^2\bigr)
\le
\frac{L^2}{2\Theta}\,\|f\|_{0,h}^2.
\end{equation}
\end{corollary}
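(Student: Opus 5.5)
This follows at once from Lemma~\ref{lem:flux-energy} and Lemma~\ref{lem:flux-estimate}. Let $u$ solve \eqref{eq:bpf-hom-bc} with $kh\notin\pi\mathbb Z$. The flux--energy identity \eqref{eq:flux-energy} holds for every grid function, so it holds for this $u$. It expresses $\|D_k^+u\|_{0,h}^2+\|D_k^-u\|_{0,h}^2$ exactly as
\[
2\Theta\cos(kh)\,|u|_{1,h}^2+2k^2\|u\|_{0,h}^2+k^2h\bigl(|u_0|^2+|u_n|^2\bigr).
\]
Lemma~\ref{lem:flux-estimate} bounds the same left-hand side by $\frac{L^2}{\Theta}\|f\|_{0,h}^2$.

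The plan is therefore:
\begin{enumerate}
\item Substitute the identity into the flux bound \eqref{eq:flux-estimate-both}. This gives
\[
2\Theta\cos(kh)\,|u|_{1,h}^2+2k^2\|u\|_{0,h}^2+k^2h\bigl(|u_0|^2+|u_n|^2\bigr)\le \frac{L^2}{\Theta}\,\|f\|_{0,h}^2 .
\]
\item Divide by $2$. This yields \eqref{eq:low-frequency-coercive} exactly, with boundary coefficient $k^2h/2$ and right-hand side $L^2/(2\Theta)$.
\end{enumerate}
No sign condition on $\cos(kh)$ is needed, since we only use an equality followed by an upper bound. When $\cos(kh)<0$ the estimate still holds as stated; it is simply no longer coercive in $|u|_{1,h}$.

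The only point to check is that both lemmas apply under the same hypotheses. Lemma~\ref{lem:flux-estimate} requires $u$ to solve \eqref{eq:bpf-hom-bc}. It also uses $\Theta=|B(\ii kh)|^2\neq 0$, which holds since $B(\ii s)$ is finite and nonzero off $2\pi\mathbb Z\setminus\{0\}$. Lemma~\ref{lem:flux-energy} is purely algebraic. The norms of $D_k^\pm u$ are taken over the staggered index sets $i=0,\dots,n-1$ and $i=1,\dots,n$ in both lemmas, so the two left-hand sides are the same quantity. There is no real obstacle here: the corollary is a direct combination of the two lemmas.
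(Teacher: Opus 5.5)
Your proof is correct and matches the paper's argument: the paper also obtains the corollary by substituting the flux--energy identity \eqref{eq:flux-energy} into the flux bound \eqref{eq:flux-estimate-both} and dividing by $2$. One small aside: the constant $L^2/\Theta$ in \eqref{eq:flux-estimate-both} requires the backward recursion for $D_k^-u$ to give $L/\sqrt{2\Theta}$, which it does by the same counting $\sum_{i=1}^{n-1}(n-i)\le n^2/2$. The looser $L/\sqrt{\Theta}$ displayed in \eqref{eq:Dminus-L2-bound} would only give $3L^2/(2\Theta)$, so citing the lemma's statement, as you do, is the right call.
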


\begin{remark}[Coercivity in the low-frequency regime]
Although \eqref{eq:low-frequency-coercive} is valid for all
$kh\notin \pi\mathbb Z$, it is coercive in the principal Nyquist regime $0<kh<\pi$ only for $0<kh<\pi/2$, since $\cos(kh)$ changes sign at
$kh=\pi/2$. Hence, the estimate
\eqref{eq:low-frequency-coercive} alone is not sufficient to establish $H^1$-control over the entire interval $0<kh<\pi$, and an additional discrete energy identity, proved in the next subsection, is needed. 
\end{remark}


\subsection{Uniform stability for homogeneous discrete radiation conditions}

We now establish $k$-explicit stability estimates for the homogeneous
discrete radiation problem \eqref{eq:bpf-hom-bc} that remain valid for all
$kh\notin \pi\mathbb Z$, in particular for the whole principal Nyquist regime $0<kh<\pi$.
The key point is the discrete energy identity associated with the three-point formulation of the BPF scheme.

\begin{lemma}[Discrete energy identity]
\label{lem:complex-energy}
Let $u=\{u_i\}_{i=0}^n$ solve \eqref{eq:bpf-hom-bc}. Then
\begin{align}
\Theta
|u|_{1,h}^2
-\frac{k^2h}{2}\bigl(|u_0|^2+|u_n|^2\bigr)
-
k^2\|u\|_{0,h}^2
&=
\Re(f,u)_{h}.
\label{eq:real-part-energy}
\end{align}
\end{lemma}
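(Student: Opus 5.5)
The plan is to test the three-point form \eqref{eq:exponential-fitted} of the scheme against $\bar u$ and sum by parts. The homogeneous radiation conditions in \eqref{eq:bpf-hom-bc} then turn the boundary terms into the weighted boundary masses $\tfrac{k^2h}{2}|u_0|^2$ and $\tfrac{k^2h}{2}|u_n|^2$. This is the three-point counterpart of Lemma~\ref{lem:flux-energy}. Unlike that lemma, it uses the equation itself rather than only the definitions of $D_k^\pm$. As explained in the last step, my bookkeeping produces the identity with $-\Re(f,u)_h$ on the right, so the sign of the source term must be checked against the statement.

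\emph{Step 1 (interior pairing).} By Proposition~\ref{prop:factorization}, the solution satisfies $\Theta(\Delta_h u)_i+k^2u_i=f_i$ for $i=1,\dots,n-1$. I multiply by $h\,\overline{u_i}$ and sum, which gives $\Theta\,h\sum_{i=1}^{n-1}(\Delta_h u)_i\overline{u_i}+k^2\|u\|_{0,h}^2=(f,u)_h$. Writing $(\Delta_h u)_i=((\nabla_h u)_i-(\nabla_h u)_{i-1})/h$ and shifting the index, discrete summation by parts gives
\[
h\sum_{i=1}^{n-1}(\Delta_h u)_i\overline{u_i}
=-|u|_{1,h}^2+(\nabla_h u)_{n-1}\overline{u_n}-(\nabla_h u)_0\overline{u_0}.
\]

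\emph{Step 2 (boundary terms).} By \eqref{eqn:rewrite D_k+} and \eqref{eqn:rewrite D_k-}, the conditions $(D_k^+u)_0=0$ and $(D_k^-u)_n=0$ read $B(\ii kh)(\nabla_h u)_0=\ii k u_0$ and $B(\ii kh)(\nabla_h u)_{n-1}=-\ii k u_n$. Multiplying by $B(-\ii kh)$ and using $\Theta=B(\ii kh)B(-\ii kh)$ gives
\[
\Theta(\nabla_h u)_0\overline{u_0}=\ii k\,B(-\ii kh)|u_0|^2,\qquad
\Theta(\nabla_h u)_{n-1}\overline{u_n}=-\ii k\,B(-\ii kh)|u_n|^2.
\]
The key scalar fact is $\Re\big(\ii B(-\ii s)\big)=-s/2$. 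It follows from $B(-\ii s)=\frac{s\,e^{\ii s/2}}{2\sin(s/2)}$, or equivalently by taking imaginary parts in $B(-z)-B(z)=z$ together with $\overline{B(\ii s)}=B(-\ii s)$. Hence the real part of $\Theta\big[(\nabla_h u)_{n-1}\overline{u_n}-(\nabla_h u)_0\overline{u_0}\big]$ equals $\tfrac{k^2h}{2}(|u_0|^2+|u_n|^2)$.

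\emph{Step 3 (assembly and the sign).} Taking real parts in Step 1 and inserting Step 2 yields
\[
-\Big(\Theta|u|_{1,h}^2-\frac{k^2h}{2}\big(|u_0|^2+|u_n|^2\big)-k^2\|u\|_{0,h}^2\Big)=\Re(f,u)_h .
\]
So my computation gives the left-hand side of \eqref{eq:real-part-energy} equal to $-\Re(f,u)_h$, not $+\Re(f,u)_h$. I have cross-checked the operator convention: $D_k^-D_k^+1=k^2=\Theta\Delta_h1+k^2$. The discrepancy is therefore a global sign of the source term and not an error in the boundary bookkeeping.

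The main obstacle is exactly this sign. I would first recheck the convention in Proposition~\ref{prop:factorization}, since if the appendix writes $D_k^-D_k^+=-(\Theta\Delta_h+k^2)$ the stated identity holds verbatim. If the convention is as displayed in \eqref{eq:exponential-fitted}, then \eqref{eq:real-part-energy} as worded holds for the problem with source $-f$. In that case I would record the identity as
\[
\Theta|u|_{1,h}^2-\tfrac{k^2h}{2}\big(|u_0|^2+|u_n|^2\big)-k^2\|u\|_{0,h}^2=-\Re(f,u)_h .
\]
The intended downstream use is stability, which only needs $|\Re(f,u)_h|\le\|f\|_{0,h}\|u\|_{0,h}$. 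Combined with Corollary~\ref{rem:low-frequency-coercive}, this is unaffected by the sign, but the statement itself should carry the corrected sign.
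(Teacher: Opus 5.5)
Your argument is correct and is essentially the paper's proof. You test the three-point form with $h\,\overline{u_i}$, sum by parts, and convert the boundary terms using $(D_k^+u)_0=(D_k^-u)_n=0$. You then take real parts via $\Re\bigl(\Theta(s)\,\ii s/B(\ii s)\bigr)=-s^2/2$, which is equivalent to your $\Re\bigl(\ii B(-\ii s)\bigr)=-s/2$.

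Your sign is also right. In \eqref{eq:pre-boundary-energy} the paper flips the sign of the left-hand side without flipping $(f,u)_h$, so the identity should end in $-\Re(f,u)_h$. For example, $n=3$, $h=1$, $k=\pi/2$, $u_1=u_2=1$ (hence $u_0=u_3=-\ii$) gives left side $-2\Theta$ and $\Re(f,u)_h=2\Theta$. This is harmless downstream, since the proof of Theorem~\ref{thm:homogeneous-uniform-stability} only uses $|\Re(f,u)_h|$ through \eqref{eq:est-fu}.
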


\begin{proof}
According to the representation \eqref{eq:exponential-fitted}, we
write the interior equation as
\[
\Theta(kh)\,(\Delta_h u)_i+k^2u_i=f_i,
\qquad i=1,\dots, n-1.
\]
Multiplying by $h\,\overline{u_i}$, summing over $i=1,\dots, n-1$,
and using the standard summation-by-parts identity
\[
h\sum_{i=1}^{n-1} (\Delta_h u)_i\,\overline{u_i}
=
-|u|_{1,h}^2
+\big(
\nabla_h u_{n-1}\,\overline{u_n}
-\nabla_h u_0\,\overline{u_0}
\big),
\]
we obtain
\begin{equation}\label{eq:pre-boundary-energy}
\Theta(kh)\big(
|u|_{1,h}^2
-(
\nabla_h u_{n-1}\,\overline{u_n}
-\nabla_h u_0\,\overline{u_0}
)
\big)
-k^2\|u\|_{0,h}^2
=
(f,u)_{h}.
\end{equation}
We next use the homogeneous discrete radiation conditions $D_k^+u_0 = D_k^-u_n = 0$. Then we deduce from \eqref{eqn:rewrite D_k+}-\eqref{eqn:rewrite D_k-} that $B(\ii kh)\,\nabla_h u_0=\ii k\,u_0$ and $B(\ii kh)\,\nabla_h u_{n-1}=-\ii k\,u_n$.
Substitution into \eqref{eq:pre-boundary-energy} yields
\begin{equation}\label{eq:complex-energy}
\Theta(kh)\bigg(
|u|_{1,h}^2
+\frac{\ii k}{B(\ii kh)}\big(|u_n|^2+|u_0|^2\big)
\bigg)
-k^2\|u\|_{0,h}^2
=
(f,u)_{h}.
\end{equation}
Finally, since $B(z)=z/(e^z-1)$, we have
\[
\Re\bigg( \Theta(s)\frac{\ii s}{B(\ii s)} \bigg) 
=
\Re\big( \Theta(s) ( e^{\ii s}-1) \big)
= \Theta(s) (\cos (s)-1)
=  -\frac{s^2}{2}.
\]
Using this formula and taking the real part of  \eqref{eq:complex-energy} gives \eqref{eq:real-part-energy}.
\end{proof}

We combine Corollary~\ref{rem:low-frequency-coercive} and Lemma~\ref{lem:complex-energy} to obtain global $k$-explicit
$L^2$ and discrete $H^1$ bounds for the homogeneous-boundary problem.

\begin{theorem}[Uniform stability for homogeneous radiation conditions]
\label{thm:homogeneous-uniform-stability}
The BPF scheme \eqref{eq:bpf-hom-bc} with homogeneous radiation conditions admits a unique solution.
For $s:=kh$ and $ t:=kL$, let $A_0(s,t)$ be
defined in \eqref{eq:def-const-A0}.
Then, for all
$kh\notin \pi\mathbb Z$,
\begin{equation}\label{eq:uniform-hom}
k\|u\|_{0,h}
\le
A_0(kh,kL)\, \|f\|_{0,h},
\qquad
\sqrt{\Theta}\,|u|_{1,h}
\le
A_0(kh,kL)\, \|f\|_{0,h}.
\end{equation}
\end{theorem}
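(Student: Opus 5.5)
The plan is to combine the auxiliary bound of Corollary~\ref{rem:low-frequency-coercive} with the real-part energy identity of Lemma~\ref{lem:complex-energy}, so that the sign-indefinite factor $\cos(kh)$ is replaced by $1+\cos(kh)=2\cos^2(kh/2)$, which is nonnegative for every $kh$. We abbreviate
\[
X:=\Theta|u|_{1,h}^2,\qquad Y:=k^2\|u\|_{0,h}^2,\qquad Z:=\frac{k^2h}{2}\bigl(|u_0|^2+|u_n|^2\bigr)\ge0,\qquad M^2:=\frac{L^2}{2\Theta}\|f\|_{0,h}^2 .
\]
In this notation, Corollary~\ref{rem:low-frequency-coercive} reads $X\cos s+Y+Z\le M^2$ with $s=kh$, and Lemma~\ref{lem:complex-energy} reads $X-Y-Z=\Re(f,u)_h$.

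\emph{Step 1 (bound for $X$).} Adding the two relations eliminates $Y$ and $Z$:
\[
X(1+\cos s)\le M^2+\Re(f,u)_h\le M^2+\|f\|_{0,h}\|u\|_{0,h}.
\]

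\emph{Step 2 (bound for $Y$).} Subtract $\cos s$ times the identity from the corollary. This eliminates $X$ and gives
\[
(Y+Z)(1+\cos s)\le M^2-\cos s\,\Re(f,u)_h\le M^2+\|f\|_{0,h}\|u\|_{0,h}.
\]
Since $Z\ge0$, we may drop it. Put $c:=\tfrac12\sec^2(s/2)$, which is finite because $kh\notin\pi\mathbb Z$ forces $\cos(s/2)\neq0$; likewise $\Theta$ is finite and positive since $s\notin2\pi\mathbb Z$. With $a:=k\|u\|_{0,h}$, both $X$ and $a^2$ are then bounded by the common quantity
\[
R:=c\Bigl(M^2+\frac{\|f\|_{0,h}\,a}{k}\Bigr).
\]

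\emph{Step 3 (quadratic argument).} Because $a\le\sqrt R$, we get $R\le cM^2+c\|f\|_{0,h}\sqrt R/k$. Solving this quadratic inequality in $\sqrt R$ yields
\[
\sqrt R\le \sqrt c\,M+\frac{c}{k}\|f\|_{0,h}.
\]
Now $\sqrt c\,M=\frac{L}{2\sqrt{\Theta}}\sec(s/2)\|f\|_{0,h}$, which is at most the first term of $A_0$ times $\|f\|_{0,h}$, because $\tfrac12\le \tfrac1{\sqrt2}$. Also $c/k=\frac{L}{2kL}\sec^2(s/2)$, which is exactly the second term of $A_0$. Hence $k\|u\|_{0,h}\le\sqrt R\le A_0\|f\|_{0,h}$ and $\sqrt\Theta|u|_{1,h}=\sqrt X\le\sqrt R\le A_0\|f\|_{0,h}$, which is \eqref{eq:uniform-hom}.

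\emph{Step 4 (existence and uniqueness).} The system \eqref{eq:bpf-hom-bc} is square, with $n+1$ equations for $n+1$ unknowns. If $f=0$, the estimate gives $\|u\|_{0,h}=0$ and $|u|_{1,h}=0$, so $u$ is constant and vanishes at the interior nodes; hence $u\equiv0$ (note $n\ge2$; alternatively, Step 2 with $f=0$ also forces $Z=0$). Injectivity then gives existence.

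I expect the main obstacle to be Step 2. The sign of $\cos s$ is indefinite, so one must see that the right linear combination of the corollary and the energy identity isolates $Y$ with the factor $1+\cos s$. One must also check that the term $-\cos s\,\Re(f,u)_h$ is bounded by $|(f,u)_h|$ regardless of sign. After that, only the quadratic absorption remains, and it produces exactly the two terms of $A_0$.
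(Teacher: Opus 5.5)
Your proposal is correct and follows the paper's proof. You form the same two combinations of Corollary~\ref{rem:low-frequency-coercive} and Lemma~\ref{lem:complex-energy}: adding them isolates $\Theta|u|_{1,h}^2$, and subtracting $\cos(kh)$ times the identity isolates $k^2\|u\|_{0,h}^2$, so the indefinite factor $\cos(kh)$ becomes $1+\cos(kh)=2\cos^2(kh/2)$. The only difference is the final absorption: you use Cauchy--Schwarz and a quadratic inequality for the common bound $R$, whereas the paper uses Young's inequality with $\varepsilon=2\cos^2(kh/2)k^2$ and then feeds the $L^2$ bound into the $H^1$ estimate; your version treats both norms at once and even gives the slightly smaller first term $\frac{L}{2\sqrt{\Theta}}|\sec(kh/2)|$ in place of $\frac{L}{\sqrt{2\Theta}}\sec(kh/2)$.
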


\begin{proof}
We use Lemma~\ref{lem:complex-energy} and Corollary~\ref{rem:low-frequency-coercive}.
From \eqref{eq:real-part-energy} and \eqref{eq:low-frequency-coercive}, we eliminate the $H^1$-seminorm to derive
\begin{equation}
\label{eq:pre-L2-balance}
\big(1+\cos(kh)\big)\,\bigg(
k^2\|u\|_{0,h}^2
+
\frac{k^2h}{2}(|u_0|^2+|u_n|^2)
\bigg)
\le
\frac{L^2}{2\Theta}\,\|f\|_{0,h}^2 
- \cos(kh) \Re(f,u)_{h}.
\end{equation}
The Cauchy--Schwarz and Young inequalities with
\(
\varepsilon=2\cos^2(kh/2)\,k^2
\)
yields
\begin{equation}\label{eq:est-fu}
|\Re(f,u)_{h}|
\le
\frac{1}{4k^2}\sec^2\!\Big(\frac{kh}{2}\Big)\,\|f\|_{0,h}^2
+
\cos^2\!\Big(\frac{kh}{2}\Big)\,k^2\|u\|_{0,h}^2.
\end{equation}
Substituting \eqref{eq:est-fu} into \eqref{eq:pre-L2-balance} and dividing by $1+\cos(kh)=2\cos^2(kh/2)$, we obtain
\begin{equation*}
k^2\|u\|_{0,h}^2
\le 
\frac12
\bigg(
\frac{L^2}{2\Theta}\,\sec^2\!\Big(\frac{kh}{2}\Big)
+
\frac{1}{4k^2}\sec^4\!\Big(\frac{kh}{2}\Big)
\bigg)\|f\|_{0,h}^2
+
\frac12 k^2\|u\|_{0,h}^2,
\end{equation*}
which gives the discrete $L^2$ norm estimate in \eqref{eq:uniform-hom}.
To estimate the discrete $H^1$-seminorm, we return to \eqref{eq:real-part-energy} and \eqref{eq:low-frequency-coercive}, and eliminate the $L^2$ norm to find that
\begin{equation}
\label{eq:pre-H1-balance}
\Theta\big(1+\cos(kh)\big)\, |u|_{1,h}^2
\le
\frac{L^2}{2\Theta}\,\|f\|_{0,h}^2 
+ \Re(f,u)_{h}.
\end{equation}
Combining \eqref{eq:pre-H1-balance} with \eqref{eq:est-fu}, and dividing by $1+\cos(s)=2\cos^2(s/2)$,
\begin{equation*}
\Theta |u|_{1,h}^2
\le
\frac12
\bigg(
\frac{L^2}{2\Theta}\,\sec^2\!\Big(\frac{kh}{2}\Big)
+
\frac{1}{4k^2}\sec^4\!\Big(\frac{kh}{2}\Big)
\bigg)\|f\|_{0,h}^2
+
\frac12 k^2\|u\|_{0,h}^2.
\end{equation*}
Applying the discrete $L^2$ estimate in \eqref{eq:uniform-hom} gives the $H^1$-seminorm estimate.

It remains to show the well-posedness.
If $f\equiv0$, then
\eqref{eq:uniform-hom} implies $u\equiv0$, so the full discrete problem \eqref{eq:bpf-hom-bc} is unique.
Since the BPF discretization defines a square linear system in the
finite-dimensional space $\mathbb C^{n+1}$, uniqueness implies
existence.
Therefore the full discrete problem \eqref{eq:bpf-hom-bc} admits a unique solution, and the
proof of Theorem~\ref{thm:homogeneous-uniform-stability} is complete.
\end{proof}

\subsection{Lifting and proof of Theorem~\ref{thm:main-stability}}
\label{sec:energy-error-general}

We now return to the full BPF discretization
\eqref{discretization SG} with general boundary data
$g_0,g_L\in\mathbb C$.
The key observation is that, by
Proposition~\ref{lem:exact-bdry}, the homogeneous Helmholtz component
generated by the boundary data is reproduced exactly by the BPF scheme.
This allows us to reduce the general problem to the homogeneous
discrete radiation problem treated in
Theorem~\ref{thm:homogeneous-uniform-stability}.
We begin with a discrete estimate for the exact plane-wave lifting. 

\begin{lemma}[Plane-wave lifting]
\label{lem:plane-wave-lifting}
Let
\[
u^{\rm pw}(x)=\alpha e^{\ii kx}+\beta e^{-\ii kx},
\qquad \alpha=\frac{g_L}{2\ii k\,e^{\ii kL}},
\qquad
\beta=-\frac{g_0}{2\ii k},
\]
so that $u^{\rm pw}(x)$ with $x\in[0,L]$ satisfies the homogeneous Helmholtz equation with
impedance boundary data $g_0,g_L$.
Define the sampled grid function $v_i:=u^{\rm pw}(x_i)$ for $i=0,\dots,n$.
Then $v=\{v_i\}_{i=0}^n$ satisfies
\begin{equation}\label{eq:lifting-bound}
k\|v\|_{0,h}
\le \frac{\sqrt L}{2}\,(|g_0|+|g_L|),
\qquad
\sqrt{\Theta}\,|v|_{1,h}
\le \frac{\sqrt L}{2}\,(|g_0|+|g_L|).
\end{equation}
\end{lemma}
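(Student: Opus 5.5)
The plan is to estimate the two plane-wave components separately and combine them with the triangle inequality. Write $v=\alpha v^+ + \beta v^-$ with $v^\pm_i := e^{\pm\ii kx_i}$, and note that $|\alpha| = |g_L|/(2k)$ and $|\beta| = |g_0|/(2k)$. It then suffices to prove, for each $v^\pm$, the two bounds
\[
k\|v^\pm\|_{0,h}\le k\sqrt{L}, \qquad \sqrt{\Theta}\,|v^\pm|_{1,h}\le k\sqrt{L}.
\]
Multiplying by $|\alpha|$ or $|\beta|$ and adding gives exactly \eqref{eq:lifting-bound}.

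For the $L^2$ bound, we have $|v^\pm_i|=1$. Hence $\|v^\pm\|_{0,h}^2 = h(n-1) \le L$, and therefore $k\|v^\pm\|_{0,h}\le k\sqrt L$.

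For the $H^1$ bound, we compute the forward difference directly:
\[
(\nabla_h v^\pm)_i = e^{\pm\ii kx_i}\,\frac{e^{\pm\ii kh}-1}{h}.
\]
This gives $|(\nabla_h v^\pm)_i| = 2|\sin(kh/2)|/h$, and summing over $n$ cells,
\[
|v^\pm|_{1,h}^2 = nh\cdot\frac{4\sin^2(kh/2)}{h^2} = \frac{4L\sin^2(kh/2)}{h^2}.
\]
Multiplying by $\Theta = (kh)^2/(4\sin^2(kh/2))$, valid since $kh\notin 2\pi\mathbb Z$, gives $\Theta|v^\pm|_{1,h}^2 = k^2L$ exactly. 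Equivalently, this follows from the rewriting \eqref{eqn:rewrite D_k+}: $D_k^+v^+=0$ gives $B(\ii kh)\nabla_h v^+_i = \ii k v^+_i$, hence $\sqrt\Theta|\nabla_h v^+_i| = k$, and $v^-$ is handled via $D_k^-$ in the same way.

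The only potential subtlety is bookkeeping of the index ranges. The $L^2$ norm runs over interior nodes only, which gives the inequality $h(n-1)\le L$. The seminorm runs over all $n$ cells, which gives equality with $L$. No cancellation between $\alpha$ and $\beta$ terms is needed, since we use the triangle inequality, so I expect no real obstacle.
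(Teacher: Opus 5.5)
Your proof is correct and takes essentially the same approach as the paper. The paper applies the triangle inequality pointwise, $|v_i|\le|\alpha|+|\beta|$ and $|(\nabla_h v)_i|\le 2|\sin(kh/2)|(|\alpha|+|\beta|)/h$, and then uses $|\alpha|+|\beta|\le(|g_0|+|g_L|)/(2k)$ and $\Theta(s)=s^2/(4\sin^2(s/2))$; you apply the triangle inequality to the norms after computing each plane-wave component exactly, which yields the same constants.
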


\begin{proof}
For the $L^2$ bound, we use $|v_i|\le |\alpha|+|\beta|$
and thus $\|v\|_{0,h}^2=h\sum_{i=1}^{n-1} |v_i|^2
\le L\,(|\alpha|+|\beta|)^2$.
The $L^2$ estimate in \eqref{eq:lifting-bound} then follows from
\begin{equation}\label{eqn:bound-alpbeta}
|\alpha|+|\beta|
\le \frac{|g_0|+|g_L|}{2k}.
\end{equation}

For the discrete $H^1$-seminorm, we compute for
$i=0,\dots,n-1$,
\[
(\nabla_h v)_i
=
\frac{\alpha(e^{\ii k(x_i+h)}-e^{\ii kx_i})
+\beta(e^{-\ii k(x_i+h)}-e^{-\ii kx_i})}{h}.
\]
Using $|e^{\ii\theta}-1|=2|\sin(\theta/2)|$, we find that
\[
|v|_{1,h}^2
=
h\sum_{i=0}^{n-1}|(\nabla_h v)_i|^2
\le
\frac{4\sin^2(kh/2)}{h^2}\,L\,(|\alpha|+|\beta|)^2.
\]
It follows from \eqref{eqn:bound-alpbeta} and the expression $\Theta(s) = s^2/(4\sin^2(s/2))$ that
\[
|v|_{1,h}^2
\le
\frac{L}{4\Theta(kh)}\,(|g_0|+|g_L|)^2,
\]
which is equivalent to the $H^1$-seminorm estimate in \eqref{eq:lifting-bound}.
\end{proof}

We now prove the main stability theorem stated at
the beginning of this section.

\begin{proof}[Proof of Theorem~\ref{thm:main-stability}]
Let $u_h$ denote the solution to the BPF scheme
\eqref{discretization SG} with source term $f$ and impedance data
$g_0,g_L$, and let $v$ be the plane-wave lifting constructed in
Lemma~\ref{lem:plane-wave-lifting}. We deduce from
Proposition~\ref{lem:exact-bdry} that
\[
D_k^-D_k^+v=0,
\qquad
(D_k^+v)_0=m(kh)g_0,
\qquad
(D_k^-v)_n=m(kh)g_L.
\]
Define the remainder $w:=u_h-v$. By linearity of the BPF scheme, $w$ satisfies the BPF scheme \eqref{eq:bpf-hom-bc} with homogeneous radiation conditions.
Applying the triangle inequality to $u_h=w+v$ and combining the estimate of $w$ from Theorem~\ref{thm:homogeneous-uniform-stability} and the estimate of $v$ from Lemma~\ref{lem:plane-wave-lifting}, we conclude \eqref{eq:main-L2-general} and
\eqref{eq:main-H1-general}.

It remains to show the well-posedness. The existence of solution $u_h$ is ensured by our construction.
If $f\equiv0$ and $g_0=g_L=0$, then
\eqref{eq:main-L2-general}--\eqref{eq:main-H1-general} imply $u_h\equiv0$, so the solution is unique.
Therefore the full discrete problem admits a unique solution, and the
proof of Theorem~\ref{thm:main-stability} is complete.
\end{proof}

\section{Consistency and convergence}\label{sec:consistency}

In this section, we establish the consistency of the BPF discretization
and derive the corresponding convergence estimates.
Let $u$ denote the exact solution of \eqref{eq:Helmholtz}--\eqref{eq:bc}
and let $u_h=\{u_i\}_{i=0}^n$ be the discrete BPF solution to
\eqref{discretization SG}.
Our goal is to obtain $k$-explicit second-order bounds for the grid error $e_i:=u_i-u(x_i)$ in the
discrete $L^2$ norm and $H^1$-seminorm.

\subsection{Residual equation and main results}

We begin by inserting the exact solution into the discrete BPF
operator.
This gives rise to the interior and boundary residuals that govern the error equation. For the interior nodes $i=1,\dots,n-1$, we define
the interior residual by
\begin{equation}\label{eq:def-tau-operator}
\tau_i (u)
:=
\Theta(kh)\,(\Delta_h u)(x_i)+k^2u(x_i)-f(x_i).
\end{equation}
Using the differential equation \eqref{eq:Helmholtz},
this can be rewritten equivalently as
\begin{equation}\label{eq:def-tau-explicit}
\tau_i(u)
=
\Theta(kh)\,(\Delta_h u)(x_i)-u''(x_i).
\end{equation}
At the boundaries, we define the residuals by
\begin{equation}\label{eq:def-beta-operator}
\beta_0(u) :=\frac{1}{m(kh)}(D_k^+u)_0-g_0,
\qquad
\beta_L(u) :=\frac{1}{m(kh)}(D_k^-u)_n- g_L.
\end{equation}
Using Proposition \ref{prop:factorization} in Appendix A, together with the impedance boundary
conditions in \eqref{eq:bc}, we may write them explicitly as
\begin{equation}\label{eq:def-beta0-explicit}
\begin{aligned}
\beta_0(u)
&=
\frac{k}{\sin(kh)}
\Big(u(h)-e^{\ii kh}u(0)\Big)
-\big(u'(0)-\ii ku(0)\big),
\\
\beta_L(u)
&=
\frac{k}{\sin(kh)}
\Big(e^{\ii kh}u(L)-u(L-h)\Big)
-\big(u'(L)+\ii ku(L)\big).
\end{aligned}
\end{equation}
For simplicity, when $u$ is the exact solution to \eqref{eq:Helmholtz}--\eqref{eq:bc}, we omit the arguments:
\begin{equation}\label{eq:w-beta-simple}
    \tau_i := \tau_i(u),\qquad 
    \beta_0 := \beta_0(u),\qquad 
    \beta_L := \beta_L(u).
\end{equation}
With these definitions, the grid error satisfies a discrete residual
equation of exactly the same form as the BPF scheme.

\begin{lemma}[Error equation]
Let $u$ be the exact solution to \eqref{eq:Helmholtz}--\eqref{eq:bc}, and let
$u_h$ be the discrete BPF solution to \eqref{discretization SG}.
Then the grid error $e_i:=u(x_i)-u_i$ for $i=0,\dots,n$
satisfies
\begin{equation*}
\left\{
\begin{aligned}
D_k^-D_k^+e_i = \tau_i&,\qquad i=1,\dots,n-1,\\
D_k^+e_0 = m(kh)\beta_0&,\qquad 
D_k^-e_n = m(kh)\beta_L,
\end{aligned}
\right.
\end{equation*}
where $\tau_i$, $\beta_0$, and $\beta_L$ are given by
\eqref{eq:def-tau-explicit}--\eqref{eq:w-beta-simple}.
\end{lemma}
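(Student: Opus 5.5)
Note a sign issue first: the lemma defines $e_i:=u(x_i)-u_i$ (earlier text wrote $u_i-u(x_i)$), and with this convention the residuals come out with the stated signs. The argument is pure linearity. We apply the BPF operators to the sampled exact solution, subtract the discrete scheme \eqref{discretization SG}, and identify the differences with $\tau_i$, $\beta_0$, $\beta_L$.

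\emph{Interior.} By Proposition~\ref{prop:factorization} in Appendix~A, $D_k^-D_k^+$ acting on any grid function equals $\Theta(kh)\Delta_h+k^2$. Hence for $i=1,\dots,n-1$,
\[
D_k^-D_k^+u(x_i)=\Theta(kh)(\Delta_h u)(x_i)+k^2u(x_i)=f_i+\tau_i
\]
by definition \eqref{eq:def-tau-operator}. Since $D_k^-D_k^+u_i=f_i$, subtraction gives $D_k^-D_k^+e_i=\tau_i$. The equivalence with the explicit form \eqref{eq:def-tau-explicit} follows from $f=u''+k^2u$ evaluated at $x_i$.

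\emph{Boundary.} From \eqref{eq:def-beta-operator}, $(D_k^+u)_0=m(kh)(g_0+\beta_0)$, while the scheme gives $(D_k^+u_h)_0=m(kh)g_0$. Since $D_k^+$ is linear, subtracting yields $D_k^+e_0=m(kh)\beta_0$. The condition at $x_n$ with $D_k^-$ and $\beta_L$ follows in the same way. Here $m(kh)\neq0$ for $kh\notin\pi\mathbb Z$, so $\beta_0,\beta_L$ are well defined.

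\emph{Explicit forms.} It remains to confirm \eqref{eq:def-beta0-explicit}, which is the only real computation. We use $B(\ii s)=\ii s/(e^{\ii s}-1)=\tfrac{s}{2\sin(s/2)}e^{-\ii s/2}$ and $B(-\ii s)=e^{\ii s}B(\ii s)$. Then
\[
(D_k^+u)_0=\frac{B(\ii kh)}{h}\bigl(u(h)-e^{\ii kh}u(0)\bigr).
\]
Dividing by $m(kh)=e^{-\ii kh/2}\cos(kh/2)$ gives the prefactor
\[
\frac{k}{2\sin(kh/2)\cos(kh/2)}=\frac{k}{\sin(kh)}.
\]
Replacing $g_0$ by $u'(0)-\ii ku(0)$ then produces the first line of \eqref{eq:def-beta0-explicit}. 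The $\beta_L$ line is analogous, using $(D_k^-u)_n=\tfrac{B(\ii kh)}{h}\bigl(e^{\ii kh}u(L)-u(L-h)\bigr)$.

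The main obstacle is only bookkeeping: keeping the sign convention of $e$ consistent and getting the phase factors in $B$ and $m$ right. There is no analytic difficulty.
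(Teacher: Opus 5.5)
Your proof is correct and follows the intended argument. The paper states this lemma without a written proof, treating it as immediate from linearity and the definitions \eqref{eq:def-tau-operator} and \eqref{eq:def-beta-operator}. Your check of the explicit boundary forms reproduces the computation in Proposition~\ref{prop:factorization}, namely $B(\ii s)=\tfrac{s}{2\sin(s/2)}e^{-\ii s/2}$, hence $B(\ii s)/m(s)=s/\sin s$. Your sign remark is also right: the lemma's convention $e_i=u(x_i)-u_i$ is the one that yields $+\tau_i$, $+\beta_0$, $+\beta_L$, and the mismatch with the section's opening line does not affect the norm estimates.
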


We now state the main results of this section.
The first theorem provides $k$-explicit bounds for the interior and
boundary residuals.

\begin{theorem}[Consistency and residual bounds]
\label{thm:residual-bounds}
Assume that $f\in H^3(0,L)\cap H^2_0(0,L)$, and let
$\tau=\{\tau_i\}_{i=1}^{n-1}$, $\beta_0$, and $\beta_L$ be
defined by \eqref{eq:def-tau-operator} and
\eqref{eq:def-beta-operator}.
Then, for any $kh\notin \pi\mathbb Z$, the residuals satisfy the second-order estimates
\begin{align}
\|\tau\|_{0,h}
&\le
L\,\Theta(kh)\,\frac{h^2}{12}\,\|f^{(3)}\|_{L^2(0,L)},
\label{eq:tau-bound-main}
\\
|\beta_0|+|\beta_L|
&\le
2\sqrt{L\,\Theta(kh)}\,\bigg| \sec\!\bigg(\frac{kh}{2}\bigg)\bigg| \,
\frac{h^2}{6}\, 
\|f''\|_{L^2(0,L)}.
\label{eq:beta-bound-main}
\end{align}
In particular, for any fixed $s_0<\pi$, the constants in
\eqref{eq:tau-bound-main}--\eqref{eq:beta-bound-main} are uniformly
bounded with respect to $k$ whenever $kh\le s_0$.
\end{theorem}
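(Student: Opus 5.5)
The plan is to work locally with a variation-of-parameters representation of $u$. This representation removes the homogeneous plane-wave part exactly, in the spirit of Proposition~\ref{lem:exact-bdry}, and leaves only integrals of $f$ against explicit kernels. Fix an interior node $x_i$. For $|y|\le h$ I would write
\[
u(x_i+y)=a_ie^{\ii ky}+b_ie^{-\ii ky}+\int_0^y\frac{\sin(k(y-z))}{k}\,f(x_i+z)\,dz ,
\]
valid because $u''+k^2u=f$. By \eqref{eq:exponential-fitted}, $\Theta(kh)\Delta_h+k^2$ annihilates sampled $e^{\pm\ii kx}$, since it equals $D_k^-D_k^+$ and \eqref{eq:one-way-exactness} holds. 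Hence the homogeneous part contributes nothing to $\tau_i$ in \eqref{eq:def-tau-operator}. We obtain the exact formula
\[
\tau_i=\frac{\Theta(kh)}{h^2}\int_{-h}^{h}K_{k}(z)\,f(x_i+z)\,dz-f(x_i),
\qquad K_k(z)=\frac{\sin(k(h-|z|))}{k}.
\]
Here $K_k$ is even and nonnegative for $kh<\pi$. Its mass is $\int K_k=4\sin^2(kh/2)/k^2$, so $\frac{\Theta}{h^2}\int K_k=1$ by the formula for $\Theta$. For $kh>\pi$ one keeps $|K_k|$ and the same algebra.

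Next I would Taylor-expand $f(x_i+z)-f(x_i)$ with integral remainder. Since the kernel is even and normalized, the constant and linear terms cancel. This leaves a weighted average of $f''$, or, after one further expansion, a term $c(kh)h^2f''(x_i)$ plus a remainder involving $f'''$. The stated bound \eqref{eq:tau-bound-main} is in terms of $\|f^{(3)}\|$ with the factor $L$. Therefore I would not bound $f''(x_i)$ pointwise. Instead I would use $f\in H^2_0$ to write $f''(x_i)=\int_0^{x_i}f'''(y)\,dy$, so that $|f''(x_i)|\le\sqrt{L}\,\|f'''\|_{L^2}$. Cauchy--Schwarz on the remainder integral then gives $|\tau_i|\le \Theta\frac{h^2}{12}\sqrt{L}\,\|f'''\|_{L^2}$ up to checking constants. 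Squaring, multiplying by $h$ and summing over at most $n$ nodes gives the factor $\sqrt{nh}\cdot\sqrt L=L$. The needed kernel moment is $\frac{\Theta}{h^2}\int K_k z^2\,dz$, which I expect to be at most $\Theta h^2/6$ using $\sin(k(h-|z|))\le k(h-|z|)$.

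For the boundary residuals I would use the explicit form \eqref{eq:def-beta0-explicit}. The same representation is now taken around $x=0$ with a one-sided interval $[0,h]$. Here the homogeneous part again cancels exactly, which is the content of Proposition~\ref{lem:exact-bdry}. One gets
\[
\beta_0=\frac{k}{\sin(kh)}\int_0^h\frac{\sin(k(h-z))}{k}f(z)\,dz .
\]
Because $f(0)=f'(0)=0$, we have $f(z)=\int_0^z(z-y)f''(y)\,dy$, and Cauchy--Schwarz gives a bound of the form $\frac{h^{2}}{|\sin(kh)|}\,k h^{1/2}\cdot\frac{h^{3/2}}{\sqrt{\cdot}}\|f''\|$. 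Writing $kh/\sin(kh)=\sqrt{\Theta}\,|\sec(kh/2)|$, this should be $\le\sqrt{\Theta}\,|\sec(kh/2)|\frac{h^2}{6}\|f''\|\cdot\sqrt{h}$, and $\sqrt h\le \sqrt L$ gives \eqref{eq:beta-bound-main}. The residual $\beta_L$ is symmetric. The final claim of uniform boundedness for $kh\le s_0$ is then immediate from $1\le\Theta\le\pi^2/4$ and $\sec(s_0/2)<\infty$.

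The main obstacle I anticipate is the interior term: obtaining exactly the constant $h^2/12$ and the $f'''$ dependence rather than $f''$. This requires handling the surviving $f''(x_i)$ term through the vanishing boundary data of $f$, and checking the kernel moment for all $kh\notin\pi\mathbb Z$, not only $kh<\pi$. If the direct kernel moment is awkward, a fallback is a Fourier-symbol argument. One extends $f$ by zero, which is legitimate because $f\in H^2_0$, and bounds the symbol $\xi^2\bigl(1-\Theta(kh)/\Theta(\xi h)\bigr)/(k^2-\xi^2)$, which vanishes at $\xi=\pm k$, by $C\,\Theta(kh)h^2|\xi|^3/\xi^2$.
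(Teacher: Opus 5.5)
Your local variation-of-parameters route differs from the paper's, and after one repair it proves the statement.

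\textbf{The step that fails.} You claim that $f\in H^2_0$ gives $f''(x_i)=\int_0^{x_i}f'''(y)\,dy$. This is false: $H^2_0(0,L)$ only forces $f=f'=0$ at the endpoints, and $f''(0)$ is free. The paper's own test source $f=\sin^2(\pi x)$ on $(0,1)$ has $f''(0)=2\pi^2$. What you can use instead is $\int_0^L f''=f'(L)-f'(0)=0$, which gives
\[
f''(x)=\frac1L\int_0^L\!\!\int_y^x f'''(t)\,dt\,dy,\qquad \|f''\|_{L^\infty(0,L)}\le\sqrt L\,\|f'''\|_{L^2(0,L)} .
\]
This holds for complex-valued $f$ as well.

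\textbf{Getting the constant $h^2/12$.} Skip the further Taylor expansion. Its $f'''$-remainder would add an $O(\Theta h^{5/2}\|f'''\|)$ term on top of $\Theta\frac{h^2}{12}\sqrt L\|f'''\|$. Instead, bound the whole weighted average of $f''$ left after the constant and linear terms cancel. Since $|K_k(z)|\le h-|z|$ for every $kh$ (from $|\sin x|\le|x|$) and $\int_{-h}^{h}(h-|z|)\frac{z^2}{2}\,dz=\frac{h^4}{12}$, you get $|\tau_i|\le\Theta\frac{h^2}{12}\sqrt L\,\|f'''\|_{L^2}$ for every admissible $kh$. Summing then gives \eqref{eq:tau-bound-main}.

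\textbf{Boundary residuals.} Your computation closes as you expect:
\[
|\beta_0|\le\frac{kh}{|\sin(kh)|}\,\frac{4}{35\sqrt3}\,h^{5/2}\,\|f''\|_{L^2(0,h)},
\]
with $\frac{kh}{|\sin(kh)|}=\sqrt{\Theta}\,|\sec(kh/2)|$ and $\frac{4}{35\sqrt3}<\frac16$. The residual $\beta_L$ follows by the reflection $x\mapsto L-x$.

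\textbf{Comparison with the paper.} The paper proceeds in three steps:
\begin{enumerate}
\item It subtracts a kernel lifting $b$, which needs $\sin(kL)\neq0$ plus a continuity argument at resonances.
\item It expands the zero-trace remainder $w$ in the sine basis.
\item It bounds the multipliers $M_h$ and $B_h$ via the mean-value theorem for $\sin^2(\sqrt t)/t$ and $\sin(\sqrt t)/\sqrt t$.
\end{enumerate}
This symbol calculus makes the dispersion mechanism visible: the multiplier stays controlled uniformly, including near $\xi=k$.

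Your normalized even kernel achieves the same exact cancellation of the plane-wave part directly. It needs no lifting and no non-resonance assumption, and it gives pointwise, localized bounds. In particular, the boundary residual only sees $f''$ on $(0,h)$ and gains $\sqrt h$ instead of $\sqrt L$.

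It also uses exactly the stated hypotheses. The paper's step $\sum_n\xi_n^6|\widehat f_n|^2=\|f'''\|_{L^2}^2$ actually requires in addition $f''(0)=f''(L)=0$; otherwise $\xi_n^3\widehat f_n$ does not tend to zero and the sum diverges. For the same reason your Fourier fallback via zero extension would not deliver $\|f'''\|$ control, since that extension lies only in $H^2(\mathbb R)$. The local argument is therefore the more robust of the two.
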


Combining the residual equation with the stability result of
Theorem~\ref{thm:main-stability}, we obtain the following convergence
theorem.

\begin{theorem}[Convergence and error estimate]
\label{thm:main-convergence}
Assume that $f\in H^3(0,L)\cap H^2_0(0,L)$.
For $s:=kh$ and $ t:=kL$, let $A_0(s,t)$ be
defined by \eqref{eq:def-const-A0}.
Then, for any $kh\notin \pi\mathbb Z$, the grid error $e_h=\{e_i\}_{i=0}^n$ satisfies
\begin{align*}
k\|e_h\|_{0,h}
&\le
\Theta\,A_0(kh,kL)
\bigg(
L\,\frac{h^2}{12}\,\|f^{(3)}\|_{L^2(0,L)}
+
\frac{h^2}{3}\,\|f''\|_{L^2(0,L)}
\bigg),
\\
\sqrt{\Theta}\,|e_h|_{1,h}
&\le
\Theta\,A_0(kh,kL)
\bigg(
L\,\frac{h^2}{12}\,\|f^{(3)}\|_{L^2(0,L)}
+
\frac{h^2}{3}\,\|f''\|_{L^2(0,L)}
\bigg).
\end{align*}
In particular, for $s_0<\pi$, the method is second-order
convergent in $\|\cdot\|_{0,h}$ and $|\cdot|_{1,h}$, with
constants uniformly bounded with respect to $k$ whenever
$kh\le s_0$ and $kL\ge \pi$.
\end{theorem}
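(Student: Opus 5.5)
The convergence theorem follows by feeding the error equation into the stability estimate of Theorem~\ref{thm:main-stability}. The grid error $e_h$ satisfies the BPF scheme \eqref{discretization SG} with source $\tau$ and boundary data $\beta_0,\beta_L$. Uniqueness and the linearity of the scheme then let us apply \eqref{eq:main-L2-general}--\eqref{eq:main-H1-general} with $f$ replaced by $\tau$ and $(g_0,g_L)$ replaced by $(\beta_0,\beta_L)$. This gives
\[
k\|e_h\|_{0,h}+\text{(or }\sqrt{\Theta}|e_h|_{1,h})\le A_0(kh,kL)\|\tau\|_{0,h}+\frac{\sqrt L}{2}\big(|\beta_0|+|\beta_L|\big),
\]
with each of the two left-hand quantities bounded separately by the right-hand side.

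Next we insert the residual bounds of Theorem~\ref{thm:residual-bounds}. The interior term contributes $A_0\,\Theta\,L\frac{h^2}{12}\|f^{(3)}\|_{L^2}$, which is exactly the first term of the claim. The boundary term contributes
\[
\frac{\sqrt L}{2}\cdot 2\sqrt{L\Theta}\,\Big|\sec\frac{kh}{2}\Big|\frac{h^2}{6}\|f''\|_{L^2}
= L\sqrt{\Theta}\,\Big|\sec\frac{kh}{2}\Big|\,\frac{h^2}{6}\|f''\|_{L^2}.
\]
To reach the stated form $\Theta A_0\frac{h^2}{3}\|f''\|$, we must show that
\[
L\sqrt{\Theta}\,\Big|\sec\frac{s}{2}\Big|\le 2\Theta A_0(s,t).
\]
This follows from the first summand of $A_0$. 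Since $A_0\ge \frac{L}{\sqrt{2\Theta}}|\sec(s/2)|$, we get $2\Theta A_0\ge \sqrt2\,L\sqrt{\Theta}\,|\sec(s/2)|$, which exceeds the required quantity. (If $\sec$ is negative, i.e.\ for $kh$ beyond the principal regime, one reads $A_0$ with absolute values, as is implicit in the stability proof, where only $\sec^2$ actually arises.) Summing the two contributions yields both displayed estimates.

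Finally, the uniformity claim needs $\Theta A_0$ bounded when $kh\le s_0<\pi$ and $kL\ge\pi$. This holds because $\Theta(s)\le \Theta(s_0)$ and $\sec(s/2)\le\sec(s_0/2)$ are bounded on $[0,s_0]$. The factor $L/(2t)=1/(2k)$ is bounded for $kL\ge \pi$ once $L$ is fixed; the first term of $A_0$ is $O(L)$ and $L$ is fixed, so the constant does not grow with $k$.

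The only delicate step is matching the boundary-residual contribution to the $\Theta A_0$ prefactor. The rest is a direct combination of Theorem~\ref{thm:main-stability} and Theorem~\ref{thm:residual-bounds}, and the main care is to keep track of the sign of $\sec(kh/2)$ outside $(0,\pi)$.
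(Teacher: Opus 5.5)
Your proposal is correct and follows the same route as the paper, which obtains the theorem by applying the stability bound of Theorem~\ref{thm:main-stability} to the error equation (source $\tau$, boundary data $\beta_0,\beta_L$) and then inserting the residual bounds of Theorem~\ref{thm:residual-bounds}. Your explicit check $L\sqrt{\Theta}\,|\sec(kh/2)|\,\frac{h^2}{6}\le \sqrt{2}\,\Theta A_0\,\frac{h^2}{6}\le \Theta A_0\,\frac{h^2}{3}$ is exactly how the boundary contribution gets absorbed, and your remark that $A_0$ must be read with $|\sec(kh/2)|$ when $kh$ lies outside $(0,\pi)$ is a correct and necessary refinement of the stated constant.
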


The remainder of this section is devoted to proving the
$k$-explicit consistency estimates stated in
Theorem~\ref{thm:residual-bounds}. As a Taylor expansion of the leading interior and boundary consistency terms does not provide a $k$-uniform control of the remainders, we choose a Fourier-based approach inspired by \cite{gander2025fourier}. In the present setting, the impedance boundary closures introduce additional difficulties, which are overcome by lifting the Helmholtz kernel component.


\subsection{Exact kernel lifting and sine expansion}
\label{subsec:kernel-lifting}

We begin the proof of Theorem~\ref{thm:residual-bounds}.
The key observation is that the BPF discretization is exact on the
Helmholtz kernel $\mathrm{span}\{e^{\pm \ii kx}\}$. This allows us to separate from the exact solution a homogeneous
oscillatory component that contributes no residual, and to reduce the
consistency analysis to a zero-trace remainder.

\begin{lemma}[Exact kernel lifting]
\label{lem:kernel-lifting}
Assume that $\sin(kL)\neq0$.
Then there exists a unique function $b\in C^\infty([0,L])$ such that
\begin{equation}
\label{eq:kernel-lifting}
-b''-k^2 b = 0 \quad \text{in }(0,L),
\qquad
b(0)=u(0),\qquad b(L)=u(L).
\end{equation}
Moreover, $b$ belongs to $\mathrm{span}\{\cos(kx),\sin(kx)\}
= \mathrm{span}\{e^{\pm \ii kx}\}$.
\end{lemma}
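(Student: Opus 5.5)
The plan is to solve the two-point boundary value problem \eqref{eq:kernel-lifting} explicitly inside the two-dimensional solution space of the homogeneous equation. Every solution of $b''+k^2b=0$ on $(0,L)$ is smooth, since it is a solution of a linear ODE with constant coefficients. Moreover, every such solution has the form $b(x)=c_1\cos(kx)+c_2\sin(kx)$. This already gives $b\in C^\infty([0,L])$ and $b\in\mathrm{span}\{\cos(kx),\sin(kx)\}$, and Euler's formula identifies that span with $\mathrm{span}\{e^{\pm\ii kx}\}$. It therefore remains to show that the boundary conditions determine $c_1,c_2$ uniquely.

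Imposing $b(0)=u(0)$ gives $c_1=u(0)$. Imposing $b(L)=u(L)$ gives $c_1\cos(kL)+c_2\sin(kL)=u(L)$. Since $\sin(kL)\neq0$ by assumption, this yields
\[
c_2=\frac{u(L)-u(0)\cos(kL)}{\sin(kL)}.
\]
Equivalently, the $2\times2$ coefficient matrix $\begin{pmatrix}1&0\\ \cos(kL)&\sin(kL)\end{pmatrix}$ has determinant $\sin(kL)\neq0$. So the linear system has exactly one solution, and existence and uniqueness hold at the same time. An explicit closed form is
\[
b(x)=\frac{u(0)\sin(k(L-x))+u(L)\sin(kx)}{\sin(kL)}.
\]
One checks it directly using $\sin(a-b)$ expansions.

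Uniqueness within $C^\infty$, or even among $C^2$ solutions, also follows by the same reasoning. The difference of two solutions solves the homogeneous Dirichlet problem, so it lies in the span with $c_1=0$ and $c_2\sin(kL)=0$, and hence vanishes.

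There is no real obstacle here. The only point requiring care is the role of the hypothesis $\sin(kL)\neq0$: it excludes exactly the Dirichlet eigenvalues $k=m\pi/L$, at which $\sin(kx)$ would be a nontrivial kernel element and uniqueness would fail. The hypothesis is used precisely there.
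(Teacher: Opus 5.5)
Your proposal is correct and takes the same approach as the paper: write $b=c_1\cos(kx)+c_2\sin(kx)$, get $c_1=u(0)$ from the left endpoint, and use $\sin(kL)\neq0$ to solve uniquely for $c_2$ from the right endpoint. Your explicit formula $b(x)=\bigl(u(0)\sin(k(L-x))+u(L)\sin(kx)\bigr)/\sin(kL)$ is also correct, and it adds a useful closed form that the paper does not write out.
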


\begin{proof}
The general solution to the homogeneous Helmholtz equation $-b''-k^2b=0$
is of the form $b(x)=A\cos(kx)+B\sin(kx)$ with $A$, $B\in\mathbb C$.
The boundary conditions in \eqref{eq:kernel-lifting} give $A=u(0)$ and $A\cos(kL)+B\sin(kL)=u(L)$.
Since $\sin(kL)\neq0$, this system is nonsingular and
therefore determines a unique pair $(A,B)$.
The representation above also shows that
$b\in \mathrm{span}\{\cos(kx),\sin(kx)\}$.
\end{proof}

\begin{remark}[Resonant wavenumbers]
The assumption $\sin(kL)\neq0$ is imposed only to simplify the derivation.
If $\sin(kL)=0$ (i.e.\ $kL=n\pi$ for some integer $n$), one may apply a limiting procedure.
The residual estimates obtained below extend to the resonant case
$\sin(kL)=0$ by continuity with respect to $k$.
\end{remark}

We now define the lifted remainder $w:=u-b$.
Since $u$ solves \eqref{eq:Helmholtz} and $b$ satisfies
\eqref{eq:kernel-lifting}, it follows that $w(0) = u(0) - b(0) = 0$ and  $w(L) = u(L) - b(L) = 0$. Then, the lifted remainder $w$ satisfies the Dirichlet problem
\begin{equation}\label{eq:w-hom-dirichlet}
-w''-k^2w=f \quad \text{in }(0,L),
\qquad
w(0)=w(L)=0.
\end{equation}
The relevance of this decomposition is that the kernel part $b$
produces no truncation error.
Indeed, by Proposition~\ref{lem:exact-bdry}, the BPF scheme is exact on
the plane-waves $e^{\pm \ii kx}$ and hence, by linearity, on the entire
space $\mathrm{span}\{e^{\pm \ii kx}\}$.
Therefore, $\tau_i(b)=0$ for $i=1,\dots,n-1$ and $\beta_0(b)=0$, $\beta_L(b)=0$.
The residuals of the exact solution reduce to those of
the lifted remainder:
\begin{equation*}
\tau_i(u)=\tau_i(w),\qquad i=1,\dots,n-1,
\qquad
\beta_0(u)=\beta_0(w),\qquad
\beta_L(u)=\beta_L(w).
\end{equation*}
Since $w(0)=w(L)=0$, the boundary residuals \eqref{eq:def-beta0-explicit}-\eqref{eq:w-beta-simple} admit the simplified forms
\begin{align}
\beta_0 = \beta_0(w)
&=
\frac{k}{\sin(kh)}\,w(h)-w'(0),
\label{eq:beta0-lifted}
\\
\beta_L = \beta_L(w)
&=
\frac{-k}{\sin(kh)}\,w(L-h)-w'(L).
\label{eq:betaL-lifted}
\end{align}
Thus, the truncation errors are determined by the zero-trace
solution $w$ of \eqref{eq:w-hom-dirichlet}.

To expand the lifted zero-trace component \(w\) in the since basis, we introduce
\begin{equation}\label{eqn:L2-basis}
\phi_n(x):=\sqrt{\frac{2}{L}}\sin(\xi_n x),
\qquad
\xi_n:=\frac{n\pi}{L},
\qquad n\in\mathbb N.
\end{equation}
Then $\{\phi_n\}_{n\ge1}$ is an orthonormal basis of $L^2(0,L)$, and
\begin{equation}\label{eqn:expand-w}
w(x)=\sum_{n\ge1}\widehat w_n\,\phi_n(x),
\qquad
\widehat w_n:=(w,\phi_n)_{L^2(0,L)}.
\end{equation}
Similarly, since $f\in L^2(0,L)$, we may write
\[
f(x)=\sum_{n\ge1}\widehat f_n\,\phi_n(x),
\qquad
\widehat f_n:=(f,\phi_n)_{L^2(0,L)}.
\]


In the next two subsections, we use the Fourier-based analysis to derive separate $k$-explicit
estimates for the interior residual $\tau$ and the boundary residuals
$\beta_0$ and $\beta_L$.

\subsection{Interior residual estimate}

We estimate the interior residual $\tau_i(w)$ by adopting a ``semi-discrete'' viewpoint. For this, we introduce the continuous and semi-discrete Helmholtz operators on $C^2[0,L]$:
\begin{equation}
\label{eq:def-cont-disc-operators}
\mathcal L := -\partial_{xx}-k^2,
\qquad
\mathcal L_h := -\Theta(kh)\,\Delta_h-k^2.
\end{equation}
Then the interior truncation error \eqref{eq:def-tau-explicit} can be extended to a pointwise residual,
\begin{equation}
\label{eq:def-tau-h}
\tau_h(x):=(\mathcal L_h-\mathcal L)w(x),
\qquad x\in(0,L),
\end{equation}
so that, by construction,
\begin{equation}
\label{eq:tau-grid-from-tau-h}
\tau_i(w) = \tau_i(u) =\tau_h(x_i),\qquad i =1,\dots,n-1.
\end{equation}

By \eqref{eq:tau-grid-from-tau-h}, it suffices to analyze the pointwise
residual $\tau_h(x)$. We use Fourier multiplier analysis. Note that the sine basis diagonalizes the continuous and semi-discrete Helmholtz operators.
Indeed, from \eqref{eqn:L2-basis},
\begin{equation}
\label{eq:cont-symbol}
\mathcal L\phi_n
=
(-\phi_n''-k^2\phi_n)
=
(\xi_n^2-k^2)\phi_n
=:\lambda(\xi_n)\phi_n.
\end{equation}
Moreover, since
\[
(\Delta_h\phi_n)(x)
=
-\frac{4}{h^2}\sin^2\!\Big(\frac{\xi_n h}{2}\Big)\phi_n(x),
\]
it follows from \eqref{eq:def-cont-disc-operators} that
\begin{equation}
\label{eq:def-lambda-discrete}
\mathcal L_h\phi_n
=
\bigg(
\Theta(kh)\,\frac{4}{h^2}\sin^2\!\Big(\frac{\xi_n h}{2}\Big)-k^2
\bigg)\phi_n
=:
\widetilde\lambda_h(\xi_n)\phi_n.
\end{equation}
Finally, testing the Helmholtz equation \eqref{eq:w-hom-dirichlet} with $\phi_n$ gives the modal relation
\begin{equation}
\label{eq:modal-relation}
\lambda(\xi_n)\,\widehat w_n=\widehat f_n,
\qquad\text{that is,}\qquad
\widehat w_n=\frac{\widehat f_n}{\lambda(\xi_n)},
\qquad n\ge1.
\end{equation}

We begin with the modal representation of $\tau_h(x)$ in the sine basis.

\begin{lemma}[Modal representation of the interior residual]
\label{lem:modal-interior-residual}
Let \(u\) be the exact solution to \eqref{eq:Helmholtz}--\eqref{eq:bc}, let
\(b\) and \(w=u-b\) be defined as in
Section~\ref{subsec:kernel-lifting}, and let \(\tau_h\) be given by
\eqref{eq:def-tau-h}.
Then 
\[
\tau_h(x)=\sum_{n\ge1}\widehat{\tau}_{h,n}\,\phi_n(x),
\]
where for $n\ge 1$,
\begin{align}
\label{eq:modal-tau}
\widehat{\tau}_{h,n}
&=
M_h(\xi_n;h,k)\,\widehat f_n, \\
\label{eq:def-mh-theta}
M_h(\xi;h,k)
&:=
\frac{\widetilde\lambda_h(\xi)-\lambda(\xi)}{\lambda(\xi)}
=
\frac{
\Theta(kh)\,\dfrac{4}{h^2}\sin^2\!\bigl(\tfrac{\xi h}{2}\bigr)-\xi^2
}{
\xi^2-k^2
}.
\end{align}
Here, $\lambda(\xi)$ and $\widetilde\lambda_h(\xi)$ are the Fourier symbols of \(\mathcal L\) and  \(\mathcal L_h\) defined in \eqref{eq:cont-symbol} and \eqref{eq:def-lambda-discrete}, respectively.
\end{lemma}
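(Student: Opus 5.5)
Our plan is to expand $w$ in the sine basis, apply $\mathcal L_h-\mathcal L$ mode by mode using the diagonalizations \eqref{eq:cont-symbol} and \eqref{eq:def-lambda-discrete}, and then divide by the modal relation \eqref{eq:modal-relation}.

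First we justify applying the operators termwise. Since $f\in H^3(0,L)\cap H^2_0(0,L)$ and $w$ solves \eqref{eq:w-hom-dirichlet}, we have $w\in H^2(0,L)\cap H^1_0(0,L)$. Because $w''=-k^2w-f$ vanishes at both endpoints, the odd extension of $w$ to a $2L$-periodic function has $H^2$ regularity. Hence the sine series \eqref{eqn:expand-w} converges absolutely and uniformly, and the same holds for its termwise second derivative, $-\sum\xi_n^2\widehat w_n\phi_n$, in $L^2$. Indeed, integrating by parts twice gives $\xi_n^2\widehat w_n=\widehat{(-w'')}_n$, using $w(0)=w(L)=0$.

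Next we treat the discrete part. The difference operator $\Delta_h$ evaluated at $x\in(0,L)$ uses the values $w(x\pm h)$, which may lie outside $[0,L]$. We interpret them through the odd $2L$-periodic extension. This agrees with $w$ wherever $x\pm h\in[0,L]$, and in particular at all grid nodes $x_i$ with $1\le i\le n-1$. Since the sine series converges pointwise to the odd periodic extension, we may apply $\Delta_h$ termwise, and the identity $(\Delta_h\phi_n)(x)=-\frac{4}{h^2}\sin^2(\xi_nh/2)\phi_n(x)$ holds for every $x$.

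Combining the two parts gives
\[
\tau_h=\sum_{n\ge1}\big(\widetilde\lambda_h(\xi_n)-\lambda(\xi_n)\big)\widehat w_n\,\phi_n .
\]
We then substitute $\widehat w_n=\widehat f_n/\lambda(\xi_n)$ from \eqref{eq:modal-relation}. This relation follows by testing \eqref{eq:w-hom-dirichlet} with $\phi_n$ and integrating by parts twice, where the boundary terms vanish because $w(0)=w(L)=0$ and $\phi_n(0)=\phi_n(L)=0$. The substitution yields \eqref{eq:modal-tau}. The explicit form \eqref{eq:def-mh-theta} is just the ratio of the symbols.

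The main obstacle is the division by $\lambda(\xi_n)=\xi_n^2-k^2$. The standing assumption $\sin(kL)\ne0$ from Lemma~\ref{lem:kernel-lifting} means $k\ne\xi_n$ for every $n$, so $\lambda(\xi_n)\neq0$ and the formula is well defined. The resonant case is handled by the continuity argument of the preceding remark.

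A secondary point is convergence of the $\tau_h$ series. Since $\sin^2\le1$, the multiplier is bounded by $|M_h|\lesssim(\Theta h^{-2}+\xi_n^2)/|\xi_n^2-k^2|$, which is bounded in $n$. Together with $\widehat f\in\ell^2$, this gives $L^2$ convergence of the series.
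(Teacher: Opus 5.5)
Your proposal is correct and follows the same route as the paper: expand $w$ in the sine basis, apply $\mathcal L_h-\mathcal L$ termwise via the eigenrelations \eqref{eq:cont-symbol} and \eqref{eq:def-lambda-discrete}, and substitute $\widehat w_n=\widehat f_n/\lambda(\xi_n)$. Your extra justifications fill in points the paper leaves implicit: the termwise differentiation, the reading of $w(x\pm h)$ near the endpoints through the odd $2L$-periodic extension (which is what the sine series actually represents), and $\lambda(\xi_n)\neq0$ from $\sin(kL)\neq0$.
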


\begin{proof}
It follows from definition \eqref{eq:def-tau-h} of $\tau_h$ and expansion \eqref{eqn:expand-w} of $w$ that
\[
\tau_h=(\mathcal L_h-\mathcal L)w
= \sum_{n\ge1}\widehat w_n (\mathcal L_h-\mathcal L)\phi_n
=  \sum_{n\ge1}\widehat w_n \big(\widetilde\lambda_h(\xi_n) - \lambda(\xi_n)\big) \phi_n,
\]
where we used \eqref{eq:cont-symbol} and \eqref{eq:def-lambda-discrete}.
We apply \eqref{eq:modal-relation} to obtain
\[
\widehat{\tau}_{h,n}
=
\bigl(\widetilde\lambda_h(\xi_n)-\lambda(\xi_n)\bigr)\widehat w_n
=
\frac{\widetilde\lambda_h(\xi_n)-\lambda(\xi_n)}
{\lambda(\xi_n)}\,\widehat f_n,
\]
which is exactly \eqref{eq:modal-tau}--\eqref{eq:def-mh-theta}.
\end{proof}

The next lemma gives a \(k\)-explicit bound for the multiplier
\(M_h\).

\begin{lemma}[Uniform bound for the interior multiplier]
\label{lem:interior-multiplier-bound}
It holds for all $kh\notin \pi\mathbb Z$ and \(\xi>0\) that
\begin{equation}
\label{eq:mh-bound}
|M_h(\xi;h,k)|
\le
\Theta(kh)\,\frac{h^2}{12}\,\xi^2.
\end{equation}
\end{lemma}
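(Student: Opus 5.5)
The plan is to reduce the multiplier to a one-variable Lipschitz estimate after rescaling by $h$. Set $s:=kh$, $y:=\xi h$ and $S(y):=4\sin^2(y/2)=2(1-\cos y)$. Since $kh\notin\pi\mathbb Z$, we have $S(s)>0$ and $\Theta(s)=s^2/S(s)$. Multiplying the numerator and denominator of \eqref{eq:def-mh-theta} by $h^2$ then gives
\[
M_h(\xi;h,k)=\frac{\Theta(s)S(y)-y^2}{y^2-s^2}
=\frac{s^2S(y)-y^2S(s)}{S(s)\,(y^2-s^2)} .
\]
The right-hand side of \eqref{eq:mh-bound} equals $s^2y^2/(12\,S(s))$. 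The common factor $1/S(s)>0$ cancels, and dividing by $s^2y^2>0$ shows that \eqref{eq:mh-bound} is equivalent to
\[
\Big|\frac{S(y)}{y^2}-\frac{S(s)}{s^2}\Big|\le\frac{|y^2-s^2|}{12}.
\]

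Introduce the function $F(t):=S(\sqrt t)/t$ of the squared variable $t\ge0$ (extended by $F(0)=1$). The required inequality is exactly $|F(y^2)-F(s^2)|\le\frac1{12}|y^2-s^2|$. By the mean value theorem it suffices to show $\sup_{t>0}|F'(t)|\le 1/12$. I would use the integral representation $1-\cos a=a^2\int_0^1(1-\tau)\cos(\tau a)\,d\tau$, which gives
\[
F(t)=2\int_0^1(1-\tau)\cos(\tau\sqrt t)\,d\tau .
\]
Differentiating under the integral sign yields
\[
F'(t)=-\int_0^1(1-\tau)\tau^2\,\frac{\sin(\tau\sqrt t)}{\tau\sqrt t}\,d\tau .
\]
Since $|\sin x/x|\le1$, this gives $|F'(t)|\le\int_0^1(1-\tau)\tau^2\,d\tau=\frac1{12}$, which is the desired sharp constant. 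It is consistent with the Taylor expansion $F(t)=1-t/12+O(t^2)$.

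Finally, the degenerate point $\xi=k$ (that is, $y=s$) needs a remark, since the denominator of $M_h$ vanishes there. At that point the numerator also vanishes, because $\Theta(s)S(s)=s^2$. So $M_h$ extends continuously with value $\Theta(s)\,s^2F'(s^2)$, and the bound persists by the same derivative estimate. In the application this case is harmless anyway, because $\sin(kL)\neq0$ is assumed, so no $\xi_n$ coincides with $k$.

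The main obstacle is obtaining the uniform Lipschitz constant $1/12$ for all $t\ge0$, not only for small $t$. The integral representation is what makes this clean; a direct Taylor argument would only control small arguments.
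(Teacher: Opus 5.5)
Your proof is correct and takes essentially the same route as the paper. Both rescale, write $M_h$ as $\Theta(kh)$ times a difference quotient of $t\mapsto\sin^2(\sqrt t)/t$ in the squared frequency, and conclude by the mean value theorem; your $F(t)$ is the paper's $g(t/4)$, so $\sup|F'|\le 1/12$ is exactly the paper's $\sup|g'|\le 1/3$.

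The only differences are minor. You bound the derivative directly through the integral Taylor remainder of $1-\cos a$, whereas the paper factors $g$ as the square of $\sin(\sqrt t)/\sqrt t$ and uses the bound $1/6$ on that function's derivative. You also treat the removable point $\xi=k$ explicitly, which the paper's proof leaves unstated.
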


\begin{proof}
Set $A:=\xi h/2$ and $B:=kh/2$.
The multiplier \eqref{eq:def-mh-theta} can be rewritten as
\[
M_h(\xi;h,k)
=
\frac{\Theta\sin^2 A-A^2}{A^2-B^2}.
\]
Introducing the function $g(t):=\sin^2(\sqrt t)/t$ for $t>0$,
we have $\sin^2A=A^2 g(A^2)$ and $\sin^2B=B^2 g(B^2)$. Then, in view of $\Theta = 1/g(B^2)$ and  the mean value theorem,
\[
|M_h(\xi;h,k)|
=
\bigg|A^2\Theta(kh)\,
\frac{g(A^2)-g(B^2)}{A^2-B^2}\bigg|
\le A^2\Theta(kh)\,\sup_{t>0} |g'(t)|.
\]
We refer to the Appendix for a proof that $\sup_{t>0} |g'(t)| \le 1/3$.
Hence
\[
|M_h(\xi;h,k)|
\le
A^2\Theta(kh)\cdot \frac13
=
\Theta(kh)\,\frac{h^2}{12}\,\xi^2,
\]
which proves \eqref{eq:mh-bound}. 
\end{proof}

We can now derive the \(k\)-explicit bound for the interior residual.

\begin{theorem}[Interior residual estimate]
Assume that \(f\in H^3(0,L)\cap H_0^2(0,L)\).
Then the interior residual
\(\tau=\{\tau_i\}_{i=1}^{n-1}\) satisfies
\begin{equation}
\label{eq:tau-bound-main-subsection}
\|\tau\|_{0,h}
\le
L\,\Theta(kh)\,\frac{h^2}{12}\,\|f^{(3)}\|_{L^2(0,L)},
\end{equation}
so the interior part of Theorem~\ref{thm:residual-bounds} follows.
Moreover, under the fixed-resolution condition \(kh\le s_0<\pi\), the
constant in \eqref{eq:tau-bound-main-subsection} is uniformly bounded
with respect to \(k\).
\end{theorem}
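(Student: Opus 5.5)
The plan is to bound the pointwise residual $\tau_h$ uniformly on $(0,L)$ and then pass to the grid norm. Since $\|\tau\|_{0,h}^2 = h\sum_{i=1}^{n-1}|\tau_h(x_i)|^2 \le L\,\sup_{x\in(0,L)}|\tau_h(x)|^2$, it suffices to show
\[
\sup_{x}|\tau_h(x)| \le \sqrt{L}\,\Theta(kh)\,\frac{h^2}{12}\,\|f^{(3)}\|_{L^2(0,L)}.
\]
By Lemma~\ref{lem:modal-interior-residual} and $|\phi_n(x)|\le\sqrt{2/L}$ we have $|\tau_h(x)|\le \sqrt{2/L}\sum_{n}|M_h(\xi_n)|\,|\widehat f_n|$. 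Lemma~\ref{lem:interior-multiplier-bound} then gives
\[
|\tau_h(x)|\le \sqrt{2/L}\,\Theta\,\frac{h^2}{12}\sum_n \xi_n^2|\widehat f_n|.
\]

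The key step is to convert $\sum_n\xi_n^2|\widehat f_n|$ into $\|f^{(3)}\|$. Here the hypothesis $f\in H^3\cap H^2_0$ enters. Integrating by parts twice against $\sin(\xi_n x)$ kills the boundary terms because $f(0)=f(L)=0$, so $\xi_n^2\widehat f_n = -\widehat{(f'')}_n$ in the sine basis. One further integration by parts against the cosine basis $\psi_n=\sqrt{2/L}\cos(\xi_n x)$ uses $f''$ and its sine pairing, with boundary terms from $\sin$ vanishing at $0,L$. This gives $\xi_n\,\widehat{(f'')}_n^{\sin} = -(f^{(3)},\psi_n)$. Hence $\xi_n^2|\widehat f_n| = |(f^{(3)},\psi_n)|/\xi_n$. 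Cauchy--Schwarz and Bessel's inequality for the orthonormal cosine family then give
\[
\sum_n \xi_n^2|\widehat f_n| \le \Big(\sum_n \xi_n^{-2}\Big)^{1/2}\|f^{(3)}\| = \frac{L}{\pi}\Big(\frac{\pi^2}{6}\Big)^{1/2}\|f^{(3)}\| = \frac{L}{\sqrt6}\|f^{(3)}\|.
\]

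Collecting the pieces, $\sup|\tau_h|\le \sqrt{2/L}\cdot\frac{L}{\sqrt6}\,\Theta\frac{h^2}{12}\|f^{(3)}\| = \sqrt{L/3}\,\Theta\frac{h^2}{12}\|f^{(3)}\|$. Multiplying by $\sqrt L$ yields a constant $L/\sqrt3\le L$, which proves \eqref{eq:tau-bound-main-subsection}. Uniformity under $kh\le s_0<\pi$ is immediate from $\Theta(s)\le \Theta(s_0)\le\pi^2/4$, since $\Theta$ is increasing on $[0,\pi]$.

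The main obstacle is justifying the termwise manipulations. First, $\tau_h(x)$ involves $w(x\pm h)$, which may leave $[0,L]$ for $x$ near the ends. This is harmless because only grid values $x_i$ with $x_i\pm h\in[0,L]$ are used, and the sine series of $w$ is naturally its odd $2L$-periodic extension. I would check that this extension is consistent: $w$ vanishes at the endpoints, and $\Delta_h$ applied to the odd extension agrees with the actual values at interior nodes. Second, we need absolute convergence of the modal series, so that evaluating pointwise is legitimate. This follows from the $\ell^1$ bound just derived, because $\sum|M_h\widehat f_n|<\infty$. Finally, the modal relation \eqref{eq:modal-relation} requires $\xi_n\ne k$. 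In the resonant case I would either argue by continuity, as in the remark, or note that $M_h\widehat f_n=(\widetilde\lambda_h-\lambda)\widehat w_n$ extends continuously in $\xi$.
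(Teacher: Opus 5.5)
\textbf{There is a genuine gap, and it is one your proposal shares with the paper's proof.}

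Up to bookkeeping, your argument is the paper's. You use the modal representation, the multiplier bound $|M_h(\xi;h,k)|\le\Theta(kh)\,h^2\xi^2/12$, Cauchy--Schwarz against $\sum_n\xi_n^{-2}=L^2/6$, and you arrive at the same constant $L\Theta(kh)/(12\sqrt3)$. The only difference is that where the paper invokes Parseval, $\sum_n\xi_n^6|\widehat f_n|^2=\|f^{(3)}\|_{L^2(0,L)}^2$, you integrate by parts three times and use Bessel for $\psi_n=\sqrt{2/L}\cos(\xi_n x)$. Both versions rest on the identity $\xi_n^3|\widehat f_n|=|(f^{(3)},\psi_n)|$, and that is where the proof breaks.

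Your first two integrations by parts are fine; they use $f(0)=f(L)=0$ and $\sin(\xi_n x)=0$ at $x=0,L$. The third one, however, produces a cosine boundary term, not a sine one:
\[
(f^{(3)},\psi_n)=\xi_n\,(f'',\phi_n)+\sqrt{2/L}\,\bigl((-1)^n f''(L)-f''(0)\bigr).
\]
So you need $f''(0)=f''(L)=0$, and $f\in H^3\cap H^2_0$ does not give this. The space $H^2_0$ prescribes $f=f'=0$ at the endpoints. The condition on $f'$ is irrelevant for the sine basis, and $f''$ is unconstrained there.

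For instance, $f(x)=x^2(L-x)^2$ lies in $H^3\cap H^2_0$ and has $f''(0)=f''(L)=2L^2$. One computes $\xi_n^3\widehat f_n=-\sqrt{2/L}\,(4L^2-48\,\xi_n^{-2})$ for odd $n$, and $0$ for even $n$. Hence $\sum_n\xi_n^2|\widehat f_n|=\infty$ while $\|f^{(3)}\|_{L^2(0,L)}<\infty$, so your key inequality is false under the stated hypothesis.

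Moreover, for fixed $k$ and $h\to0$, the quantity $\sqrt{2/L}\sum_n|M_h(\xi_n;h,k)|\,|\widehat f_n|$ is itself of size $h^2\log(1/h)$ for this $f$. This is because $|M_h(\xi_n;h,k)|\gtrsim h^2\xi_n^2$ for $\xi_n\lesssim 1/h$. So no sharper multiplier bound can rescue a termwise absolute-value estimate. Absolute convergence of the modal series does still hold, since $\sup_\xi|M_h|<\infty$ and $\sum_n|\widehat f_n|<\infty$; it just no longer follows from your $\ell^1$ bound.

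The paper's Parseval step is equivalent to the same identity and carries the same hidden requirement. Indeed, its own test source $\sin^2(\pi x)$ has $f''(0)=2\pi^2$. So your proposal matches the paper's proof, including this gap.

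The straightforward repair is to add the compatibility conditions $f''(0)=f''(L)=0$, i.e.\ to assume that the odd $2L$-periodic extension of $f$ lies in $H^3$. With that assumption your argument is complete as written, and Bessel becomes Parseval. Proving the estimate under $H^3\cap H^2_0$ alone would require a genuinely different treatment of the boundary contribution. That argument would have to exploit cancellation in the modal series, or work directly in physical space at the interior nodes, rather than sum absolute values.
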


\begin{proof}
We know from Lemma~\ref{lem:modal-interior-residual} that
\[
\tau_h(x)
=
\sum_{n\ge1}M_h(\xi_n;h,k)\,\widehat f_n\,\phi_n(x).
\]
Using \(|\phi_n(x)|\le \sqrt{2/L}\) and the Cauchy--Schwarz inequality, we obtain
\begin{align*}
|\tau_h(x)|
&\le
\sqrt{\frac{2}{L}}
\sum_{n\ge1}|M_h(\xi_n;h,k)|\,|\widehat f_n|
\\
&\le
\sqrt{\frac{2}{L}}
\bigg(
\sum_{n\ge1}|M_h(\xi_n;h,k)|^2\,\xi_n^{-6}
\bigg)^{1/2}
\bigg(
\sum_{n\ge1}\xi_n^6|\widehat f_n|^2
\bigg)^{1/2}.
\end{align*}
By Lemma~\ref{lem:interior-multiplier-bound},
\[
\sum_{n\ge1}|M_h(\xi_n;h,k)|^2\,\xi_n^{-6}
\le
\Theta(kh)^2\frac{h^4}{144}\sum_{n\ge1}\xi_n^{-2}.
\]
Since \(\xi_n=n\pi/L\),
\begin{equation}\label{eq:sum-n^2}
\sum_{n\ge1}\xi_n^{-2}
=
\frac{L^2}{\pi^2}\sum_{n\ge1}\frac1{n^2}
=
\frac{L^2}{6}.
\end{equation}
Because of \(f\in H^3(0,L)\cap H_0^2(0,L)\), Parseval's identity for the sine basis gives
$
\sum_{n\ge1}\xi_n^6|\widehat f_n|^2
=
\|f^{(3)}\|_{L^2(0,L)}^2.
$
Combining these estimates, we find that
\[
|\tau_h(x)|
\le
\sqrt{\frac{2}{L}}
\frac{\Theta(kh)\,h^2}{12}
\bigg(\frac{L^2}{6}\bigg)^{1/2}
\,
\|f^{(3)}\|_{L^2(0,L)}
=
\sqrt{\frac{L}{3}}\,
\frac{\Theta(kh)\,h^2}{12}\,
\|f^{(3)}\|_{L^2(0,L)}.
\]
Finally, using \eqref{eq:tau-grid-from-tau-h},
\begin{align*}
  \|\tau\|_{0,h} = \bigg(h\sum_{i=1}^{n-1}|\tau_h(x_i)|^2\bigg)^{1/2}
  \le \sqrt{L}\,\|\tau_h\|_{L^\infty(0,L)}
  \le \frac{L\,\Theta(kh)}{12\sqrt3}\,
  h^2\,\|f^{(3)}\|_{L^2(0,L)}.
\end{align*}
This proves \eqref{eq:tau-bound-main-subsection}.
\end{proof}

\subsection{Boundary residual estimate}

We turn to the boundary residuals $\beta_0$, $\beta_L$, which are defined in \eqref{eq:def-beta0-explicit}--\eqref{eq:w-beta-simple}.
We first derive their modal representation.

\begin{lemma}[Modal representation of the boundary residuals]
\label{lem:modal-boundary-residual}
Let \(u\) be the exact solution to \eqref{eq:Helmholtz}--\eqref{eq:bc}, let \(w=u-b\), where $b$ is defined in Lemma \ref{lem:kernel-lifting}, and let \(\beta_0,\beta_L\) be
defined by \eqref{eq:beta0-lifted}--\eqref{eq:betaL-lifted}.
Then
\begin{align}
  & \beta_0=\sum_{n\ge1} B_h(\xi_n;h,k)\,\widehat f_n, \quad
  \beta_L=\sum_{n\ge1} (-1)^nB_h(\xi_n;h,k)\,\widehat f_n,
  \quad\mbox{where} \nonumber \\
  & \label{eq:def-bh}
  B_h(\xi;h,k) = \sqrt{\frac{2}{L}}\;\frac{h}{\sin (kh)}\;
  \frac{kh\sin (\xi h)-\xi h\sin (kh)}{(\xi h)^2-(kh)^2}.
\end{align}
\end{lemma}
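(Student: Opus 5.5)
We substitute the sine expansion \eqref{eqn:expand-w} of the zero-trace remainder $w$ into the simplified boundary residuals \eqref{eq:beta0-lifted}--\eqref{eq:betaL-lifted}, and then use the modal relation \eqref{eq:modal-relation} to express every coefficient in terms of $\widehat f_n$. Since $f\in H^3(0,L)\cap H^2_0(0,L)$, we have $\widehat f_n=O(\xi_n^{-3})$ in the weighted $\ell^2$ sense. Together with $\widehat w_n=\widehat f_n/(\xi_n^2-k^2)$, this makes the series for $w$ and $w'$ absolutely and uniformly convergent on $[0,L]$. In particular, the pointwise evaluations $w(h)$, $w(L-h)$, $w'(0)$ and $w'(L)$ may be computed termwise. (For a resonant $\xi_n=k$ we argue as in the remark on resonant wavenumbers, by continuity in $k$; alternatively we note that the combination below has a removable singularity at $\xi=k$.)

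\textbf{The endpoint $x=0$.} We have $\phi_n(h)=\sqrt{2/L}\,\sin(\xi_n h)$ and $\phi_n'(0)=\sqrt{2/L}\,\xi_n$. Termwise,
\[
\beta_0=\sqrt{\tfrac{2}{L}}\sum_{n\ge1}\frac{\widehat f_n}{\xi_n^2-k^2}\Big(\frac{k\sin(\xi_n h)}{\sin(kh)}-\xi_n\Big)
=\sqrt{\tfrac{2}{L}}\sum_{n\ge1}\frac{\widehat f_n}{\sin(kh)}\,\frac{k\sin(\xi_n h)-\xi_n\sin(kh)}{\xi_n^2-k^2}.
\]
To obtain \eqref{eq:def-bh}, we multiply numerator and denominator by $h^2$. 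This gives $h\,(kh\sin(\xi_nh)-\xi_nh\sin(kh))/((\xi_nh)^2-(kh)^2)$, which is exactly $B_h(\xi_n;h,k)\widehat f_n$.

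\textbf{The endpoint $x=L$.} We use the identities $\sin(\xi_n(L-h))=-(-1)^n\sin(\xi_n h)$ and $\phi_n'(L)=\sqrt{2/L}\,\xi_n(-1)^n$, both consequences of $\xi_nL=n\pi$. Then
\[
-\frac{k}{\sin(kh)}\phi_n(L-h)-\phi_n'(L)=(-1)^n\sqrt{\tfrac{2}{L}}\Big(\frac{k\sin(\xi_nh)}{\sin(kh)}-\xi_n\Big),
\]
which yields the stated formula with the extra factor $(-1)^n$.

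\textbf{Main obstacle.} The only genuinely delicate point is justifying the termwise evaluation of $w'$ at the endpoints, since a sine series differentiated termwise need not converge. We will handle it by noting that $\sum_n\xi_n|\widehat w_n|\le\sum_n\xi_n|\widehat f_n|/|\xi_n^2-k^2|$. Apart from finitely many modes, this is bounded by $C\sum_n\xi_n^{-1}|\widehat f_n|<\infty$. So the differentiated series converges uniformly and represents $w'$, because $w\in H^2$ with zero trace. The endpoint evaluations are then legitimate.
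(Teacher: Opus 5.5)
Your proposal is correct and follows essentially the paper's proof. You substitute the sine expansion of $w$ into \eqref{eq:beta0-lifted}--\eqref{eq:betaL-lifted}, replace $\widehat w_n$ via \eqref{eq:modal-relation}, and treat $x=L$ using $\sin(\xi_nL)=0$ and $\cos(\xi_nL)=(-1)^n$. You also justify the termwise evaluation of $w'$ at the endpoints through $\sum_n\xi_n|\widehat w_n|<\infty$, which needs only $f\in L^2$ and which the paper does not justify.

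Two caveats come from the paper, not from your reasoning. First, $u_{xx}+k^2u=f$ and $b''+k^2b=0$ give $w''+k^2w=f$, so \eqref{eq:w-hom-dirichlet} has a sign slip and the correct modal relation is $\lambda(\xi_n)\widehat w_n=-\widehat f_n$. Both expansions therefore hold only up to a global minus sign, which does not affect the absolute-value bounds that follow. Second, the resonant case $\xi_n=k$ you discuss cannot occur, since $b$ exists only when $\sin(kL)\neq0$.
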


\begin{proof}
We infer from expansion \eqref{eqn:expand-w} of $w$ in the sine basis that
\[
w'(0)=\sum_{n\ge1}\sqrt{\frac{2}{L}}\,\xi_n\,\widehat w_n,
\qquad
w(h)=\sum_{n\ge1}\sqrt{\frac{2}{L}}\sin(\xi_n h)\,\widehat w_n.
\]
Since \eqref{eq:modal-relation} gives $(\xi_n^2-k^2)\widehat w_n=\widehat f_n$,
we deduce from \eqref{eq:beta0-lifted} that
\begin{align*}
\beta_0
&=
\sum_{n\ge1}\sqrt{\frac{2}{L}}
\bigg(
\frac{k}{\sin(kh)}\sin(\xi_n h)-\xi_n
\bigg)\widehat w_n
\\
&=
\sum_{n\ge1}\sqrt{\frac{2}{L}}
\frac{
\frac{k}{\sin(kh)}\sin(\xi_n h)-\xi_n
}{
\xi_n^2-k^2
}\,\widehat f_n
=
\sum_{n\ge1} B_h(\xi_n;h,k)\,\widehat f_n.
\end{align*}
For the right endpoint, we use $\sin(\xi_n L)=0$ and 
$\cos(\xi_n L)=(-1)^n$ to write
\begin{align*}
w'(L) &=
\sum_{n\ge1}\sqrt{\frac{2}{L}}\,(-1)^n\xi_n\,\widehat w_n, \\
w(L-h) &=
\sum_{n\ge1}\sqrt{\frac{2}{L}}\sin\bigl(\xi_n(L-h)\bigr)\widehat w_n
=
-\sum_{n\ge1}\sqrt{\frac{2}{L}}\,(-1)^n\sin(\xi_n h)\,\widehat w_n,
\end{align*}
where we have used $\sin(a-b) = \sin(a)\cos(b)-\cos(a)\sin(b)$ in the last equality.
Substituting into \eqref{eq:betaL-lifted}, we obtain
\begin{equation*}
\beta_L
=
\sum_{n\ge1}\sqrt{\frac{2}{L}}\,(-1)^n
\Bigl(
\frac{k}{\sin(kh)}\sin(\xi_n h)-\xi_n
\Bigr)\widehat w_n
=
\sum_{n\ge1}(-1)^n\,B_h(\xi_n;h,k)\,\widehat f_n,
\end{equation*}
which finishes the proof.
\end{proof}

The next lemma gives a \(k\)-explicit bound for the boundary
multiplier.

\begin{lemma}[Uniform bound for the boundary multiplier]
\label{lem:boundary-multiplier-bound}
It holds for all $kh\notin \pi\mathbb Z$ and \(\xi>0\) that
\begin{equation*}
|B_h(\xi;h,k)|
\le
\sqrt{\frac{2\Theta(kh)}{L}}\;
\frac{\xi h^2}{6}\, \Big| \sec\!\Big(\frac{kh}{2}\Big)\Big|.
\end{equation*}
\end{lemma}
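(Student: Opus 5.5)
The plan is to reduce $B_h$ to a difference quotient of the function $\sin x/x$, exactly parallel to the proof of Lemma~\ref{lem:interior-multiplier-bound}, and then to recognize the prefactor $kh/\sin(kh)$ as $\sqrt{\Theta}\sec(kh/2)$. Set $a:=\xi h>0$ and $b:=kh\notin\pi\mathbb Z$. The key algebraic observation is
\[
b\sin a-a\sin b = ab\Big(\frac{\sin a}{a}-\frac{\sin b}{b}\Big).
\]
Introduce $G(t):=\sin(\sqrt t)/\sqrt t$ for $t>0$, so that $\sin a/a=G(a^2)$ and $\sin b/b=G(b^2)$. Then the last factor in \eqref{eq:def-bh} becomes
\[
\frac{b\sin a-a\sin b}{a^2-b^2}=ab\,\frac{G(a^2)-G(b^2)}{a^2-b^2}.
\]
By the mean value theorem, its modulus is at most $ab\sup_{t>0}|G'(t)|$. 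The removable case $a=b$ is covered by continuity.

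The main step is the bound $\sup_{t>0}|G'(t)|\le 1/6$. With $x=\sqrt t$ one computes
\[
G'(t)=\frac{x\cos x-\sin x}{2x^3}.
\]
Put $\varphi(x):=x\cos x-\sin x$. Then $\varphi(0)=0$ and $\varphi'(x)=-x\sin x$, so
\[
|\varphi(x)|=\Big|\int_0^x t\sin t\,dt\Big|\le\int_0^x t^2\,dt=\frac{x^3}{3},
\]
using $|\sin t|\le t$. Hence $|G'(t)|\le 1/6$. This elementary inequality is the only genuinely analytic point, and I expect it to be the main obstacle; it is the analogue of the appendix bound used for $g$ in Lemma~\ref{lem:interior-multiplier-bound}.

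Putting the pieces together,
\[
|B_h(\xi;h,k)|\le\sqrt{\frac2L}\,\frac{h}{|\sin b|}\cdot\frac{ab}{6}
=\sqrt{\frac2L}\,\frac{\xi h^2}{6}\,\frac{b}{|\sin b|}.
\]
Finally, use $\sin b=2\sin(b/2)\cos(b/2)$ together with $\Theta(b)=b^2/(4\sin^2(b/2))$. This gives
\[
\frac{b}{|\sin b|}=\frac{b}{2|\sin(b/2)|}\,\Big|\sec\Big(\frac b2\Big)\Big|=\sqrt{\Theta(kh)}\,\Big|\sec\Big(\frac{kh}{2}\Big)\Big|,
\]
which is valid since $b\notin\pi\mathbb Z$ ensures that both $\sin(b/2)$ and $\cos(b/2)$ are nonzero. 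Substituting yields exactly the claimed estimate.
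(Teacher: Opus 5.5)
Your proposal is correct and follows essentially the same route as the paper. You write the last factor of $B_h$ as $ab$ times a difference quotient of $\sin(\sqrt t)/\sqrt t$, apply the mean value theorem with $\sup_{t>0}|h'(t)|\le 1/6$ (which the paper proves in Lemma~\ref{lem:appendix-g} via the same integral identity), and then identify $kh/|\sin(kh)|=\sqrt{\Theta(kh)}\,|\sec(kh/2)|$. Your continuity argument for the removable case $\xi h=kh$ covers a detail the paper leaves implicit.
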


\begin{proof}
Starting from \eqref{eq:def-bh}, we rewrite \(B_h\) as
\[
B_h(\xi;h,k)
=
\sqrt{\frac{2}{L}}\;
\frac{hAB}{\sin B}\;
\frac{h(A^2)-h(B^2)}{A^2-B^2},
\qquad
A:=\xi h,\quad B:=kh,
\]
where $h(t):=\sin(\sqrt{t})/\sqrt{t}$ for $t>0$. We apply the mean-value theorem and the bound $\sup_{t>0} |h'(t)|\le 1/6$ from Lemma \ref{lem:appendix-g} in the Appendix:
\begin{align*}
  |B_h(\xi;h,k)| &\le\sqrt{\frac{2}{L}}\;
  \frac{hAB}{|\sin B|}\;\sup_{t>0} |h'(t)|
  \le \sqrt{\frac{2}{L}}\;\frac{\xi h^2}{6}\;\frac{kh}{|\sin (kh)|} \\
  &= \sqrt{\frac{2}{L}}\;\frac{\xi h^2}{6}\;
  \sqrt{\Theta(kh)}\, \Big| \sec\!\Big(\frac{kh}{2}\Big)\Big|.
\end{align*}
This ends the proof.
\end{proof}

We can now estimate the boundary residuals.

\begin{theorem}[Boundary residual estimate]
Assume that \(f\in H_0^2(0,L)\).
Then 
\begin{equation}
\label{eq:beta-bound-main-subsection}
|\beta_0|+|\beta_L|
\le
2\sqrt{L\,\Theta(kh)}\;
\Big| \sec\!\Big(\frac{kh}{2}\Big)\Big|\;
\frac{h^2}{6}\;
\|f''\|_{L^2(0,L)},
\end{equation}
so the boundary part of Theorem~\ref{thm:residual-bounds} follows.
Moreover, under the fixed-resolution condition \(kh\le s_0<\pi\), the
constant in \eqref{eq:beta-bound-main-subsection} is uniformly bounded
with respect to \(k\).
\end{theorem}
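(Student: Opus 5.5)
The plan mirrors the interior residual estimate, with the modal multiplier $B_h$ in place of $M_h$ and $H^2$-regularity of $f$ in place of $H^3$. By Lemma~\ref{lem:modal-boundary-residual},
\[
|\beta_0|\le\sum_{n\ge1}|B_h(\xi_n;h,k)|\,|\widehat f_n|,
\qquad
|\beta_L|\le\sum_{n\ge1}|B_h(\xi_n;h,k)|\,|\widehat f_n|,
\]
since $|(-1)^n|=1$. It therefore suffices to bound the single series $S:=\sum_{n}|B_h(\xi_n)|\,|\widehat f_n|$ and to double the result.

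First apply Cauchy--Schwarz with the weight $\xi_n^{\pm 2}$:
\[
S\le\bigg(\sum_{n\ge1}|B_h(\xi_n;h,k)|^2\xi_n^{-4}\bigg)^{1/2}\bigg(\sum_{n\ge1}\xi_n^4|\widehat f_n|^2\bigg)^{1/2}.
\]
Lemma~\ref{lem:boundary-multiplier-bound} gives $|B_h(\xi_n)|^2\xi_n^{-4}\le \frac{2\Theta}{L}\frac{h^4}{36}\sec^2(kh/2)\,\xi_n^{-2}$. The sum $\sum_n\xi_n^{-2}=L^2/6$ is taken from \eqref{eq:sum-n^2}. Hence the first factor is at most
\[
\sqrt{\tfrac{2\Theta}{L}}\,\tfrac{h^2}{6}\,|\sec(kh/2)|\,\tfrac{L}{\sqrt6}
=\sqrt{L\Theta}\,\tfrac{h^2}{6}\,|\sec(kh/2)|\,\tfrac{1}{\sqrt3}.
\]

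For the second factor I would use Parseval in the sine basis. Since $f\in H_0^2(0,L)$, integrating by parts twice shows that $f''$ has sine coefficients $-\xi_n^2\widehat f_n$. The boundary terms vanish because $f(0)=f(L)=0$ and $f'(0)=f'(L)=0$; in fact $f(0)=f(L)=0$ alone suffices for the computation of the sine coefficients of $f''$. Consequently $\sum_n\xi_n^4|\widehat f_n|^2=\|f''\|_{L^2}^2$.

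Combining the two factors gives $S\le\sqrt{L\Theta}\,|\sec(kh/2)|\,\frac{h^2}{6}\|f''\|_{L^2}$, with an extra harmless factor $1/\sqrt3\le1$. Adding the bounds for $\beta_0$ and $\beta_L$ yields \eqref{eq:beta-bound-main-subsection}. The uniformity claim follows because $1\le\Theta(s)\le\pi^2/4$ on $[0,\pi]$, and $\sec(s/2)\le\sec(s_0/2)$ for $0<s\le s_0<\pi$.

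The main point requiring care is the justification of the modal series for $\beta_0,\beta_L$, which involve the pointwise values $w'(0)$ and $w(h)$. The expansion of $w'(0)$ in the sine series converges only because $\widehat w_n=\widehat f_n/(\xi_n^2-k^2)$ decays fast enough. That decay holds since $\sum\xi_n|\widehat w_n|\lesssim\sum\xi_n^{-1}|\widehat f_n|<\infty$. Resonant modes with $\xi_n=k$ are excluded by the assumption $\sin(kL)\ne0$, with the resonant case treated by continuity as in the remark. The absolute convergence of $S$ established above also confirms, a posteriori, that the series manipulations are legitimate.
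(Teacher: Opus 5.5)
Your proposal is correct and follows the paper's proof essentially step for step. You apply Cauchy--Schwarz with weights $\xi_n^{\mp 2}$, the multiplier bound of Lemma~\ref{lem:boundary-multiplier-bound}, $\sum_n\xi_n^{-2}=L^2/6$ and Parseval for $f''$; this gives the same intermediate bound with the extra factor $1/\sqrt3$, which is then dropped, and you double it for $\beta_L$ because $|(-1)^n|=1$. Your added remarks, that $f(0)=f(L)=0$ already suffices for the sine coefficients of $f''$ and the justification of the termwise series for $w'(0)$ and $w(h)$, are correct refinements the paper leaves implicit.
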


\begin{proof}
We conclude from Lemma~\ref{lem:modal-boundary-residual} and the Cauchy--Schwarz inequality that
\[
|\beta_0|
= \bigg|\sum_{n\ge1}B_h(\xi_n;h,k)\,\widehat f_n\bigg|
\le
\bigg(\sum_{n\ge1}|B_h(\xi_n;h,k)|^2\,\xi_n^{-4}\bigg)^{1/2}
\bigg(\sum_{n\ge1}\xi_n^4|\widehat f_n|^2\bigg)^{1/2}.
\]
It follows from Lemma~\ref{lem:boundary-multiplier-bound} and \eqref{eq:sum-n^2} that
\[
\sum_{n\ge1}|B_h(\xi_n;h,k)|^2\,\xi_n^{-4}
\le
\frac{2\Theta}{L}\,
\sec^2\!\Big(\frac{kh}{2}\Big)\,
\frac{h^4}{36}
\sum_{n\ge1}\xi_n^{-2}
=
\frac{L\Theta}{3}\,
\sec^2\!\Big(\frac{kh}{2}\Big)\,
\frac{h^4}{36}.
\]
Moreover, because of \(f\in H_0^2(0,L)\), Parseval's identity for the sine basis yields
$
\sum_{n\ge1}\xi_n^4|\widehat f_n|^2
=
\|f''\|_{L^2(0,L)}^2.
$
Combining these estimates, we obtain
\begin{equation}\label{eq:beta0-bound-sharp}
|\beta_0|
\le
\bigg(
\frac{L\Theta}{3}\,
\sec^2\!\Big(\frac{kh}{2}\Big)\,
\frac{h^4}{36}
\bigg)^{1/2}
\|f''\|_{L^2(0,L)}.
\end{equation}

For the right endpoint, Lemma~\ref{lem:modal-boundary-residual} shows that \(|\widetilde B_h(\xi_n;h,k)|=|B_h(\xi_n;h,k)|\) and thus, the same
argument yields the same bound as in \eqref{eq:beta0-bound-sharp}.
Finally, summing the estimates of $\beta_0$ and $\beta_L$ leads to
\eqref{eq:beta-bound-main-subsection}.
\end{proof}

\section{Numerical Experiments}\label{sec:numerical-experiments}

In this section, we present numerical experiments for the proposed BPF
scheme. The first two tests are designed to validate the theoretical
analysis. We then test the robustness of the BPF scheme in terms of nonsmooth sources. Finally, we compare various numerical methods.

We consider the relative $L^{\infty}$- and $V$-errors, defined by, respectively,
\begin{equation*}
    \|e\|_{L^{\infty}} = \frac{\|u - u_h\|_{L^{\infty}}}{\|u\|_{L^{\infty}}}, \quad
    \|e\|_{V} = \frac{\|u - u_h\|_{V}}{\|u\|_{V}},
\end{equation*}
where $\|u\|_V^2 = k^2 \|u\|_{0, h}^2 + |u|_{1,h}^2$, and $u$ is the reference solution.

\subsection{Exact resolution of a 1D plane-wave}
As shown in Section~\ref{subsec:symbol}, the BPF scheme exactly
reproduces plane-wave solutions. To illustrate this property, we
consider the exact $u(x)=2e^{\ii kx}+e^{-\ii kx}$ over the interval $(0,1)$.
The corresponding impedance are $g_0=u'(0)-\ii ku(0)=-2\ii k$ and $g_1=u'(1)+\ii ku(1)=4\ii ke^{\ii k}$.
We choose the wavenumber $k=2^7$ and the coarse mesh size $h=2^{-3}$.
With this setup, the computed solution agrees with the sampled exact
solution up to machine precision; more precisely, the absolute
$L^\infty$-error is $2.91\times 10^{-15}$. In
Figure~\ref{fig:1D-exact-solution}, we plot the real and imaginary
parts of the exact and numerical solutions. The figures show that the
BPF scheme reproduces the plane-wave accurately even on a very coarse
mesh.
\begin{figure}[ht]
    \centering
    \includegraphics[width=0.49\linewidth]{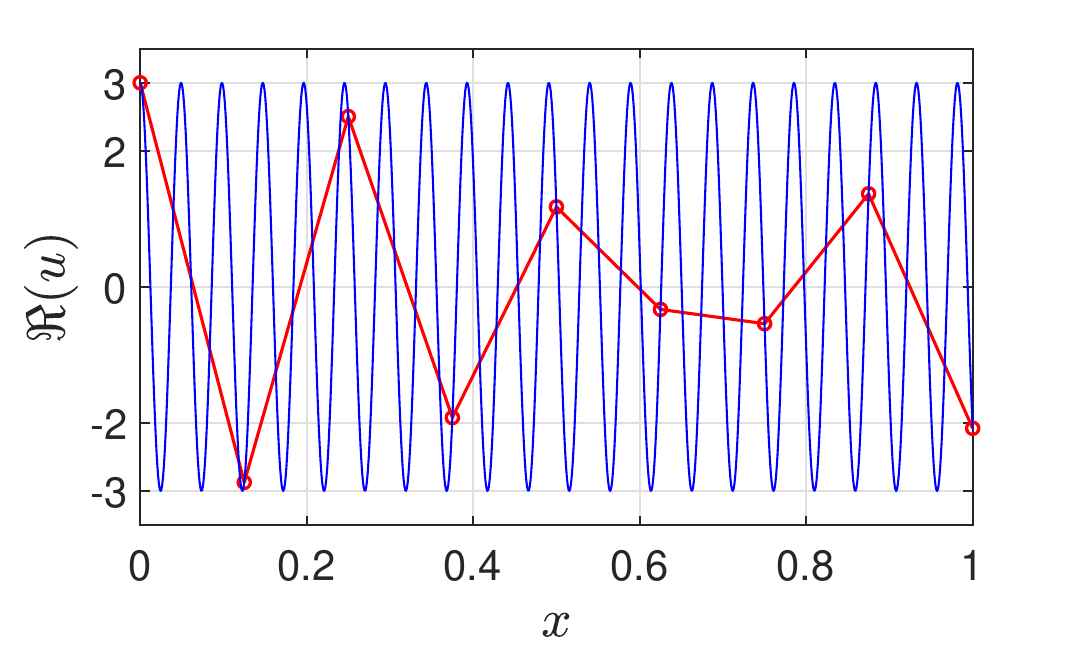}\includegraphics[width=0.49\linewidth]{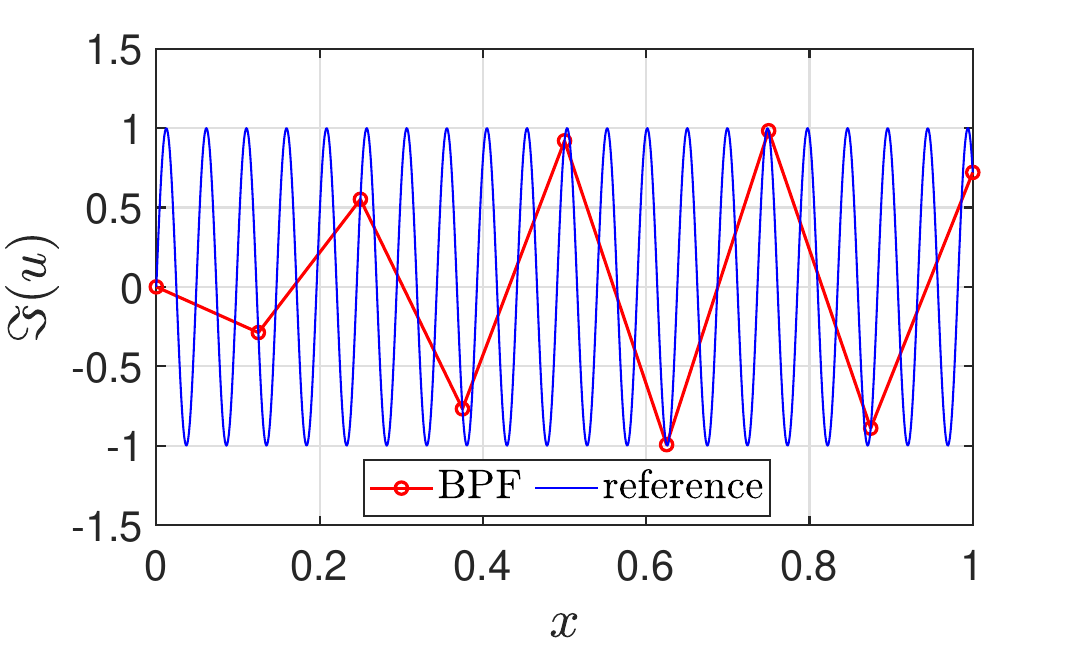}
    \caption{Comparison between the reference solution and the numerical
    solution for the plane-wave test on a coarse mesh.}
    \label{fig:1D-exact-solution}
\end{figure}

\subsection{A smooth manufactured-solution test}
To directly validate the convergence estimate of Theorem~4.3, we
consider the smooth manufactured solution
$u(x)=e^{\ii kx}+r(x)$, $r(x):=x^4(1-x)^4$ for $x\in(0,1)$.
The source term is defined by $f(x) = r''(x)+k^2r(x)$ or, more explicitly,
\begin{align*}
f(x) =  12x^2(1-x)^4 - 32x^3(1-x)^3 + 12x^4(1-x)^2 + k^2x^4(1-x)^4.
\end{align*}
The impedance boundary data are taken from the exact solution:
\[
g_0=u'(0)-\ii ku(0)=0,
\qquad
g_1=u'(1)+\ii ku(1)=2\ii k e^{\ii k}.
\]
Taking into account that $r$ has fourth-order zeros at both endpoints, it follows that $f(0)=f(1)=f'(0)=f'(1)=0$,
so this example is consistent with our regularity assumptions. Moreover, it holds that $\|f^{(\alpha)}\|_{2}\lesssim k^2$ for $\alpha= 2,3$.

We report the relative $L^\infty$- and $V$-errors
as the mesh is refined with $h=3^{-9},\,\dots,$ $3^{-5}$.
Because the $V$-norm of the exact solution does not depend on $h$ when $k$ is fixed, the relative $V$-norm error is the same as the absolute energy error when assessing the convergence rate in relation to $h$. Figure~\ref{fig:1D-convergence-smooth-f} shows that the BPF solution exhibits
second-order convergence under mesh refinement, in agreement with
Theorem~4.3.

\begin{figure}[ht]
    \centering
    \includegraphics[width=0.45\linewidth]{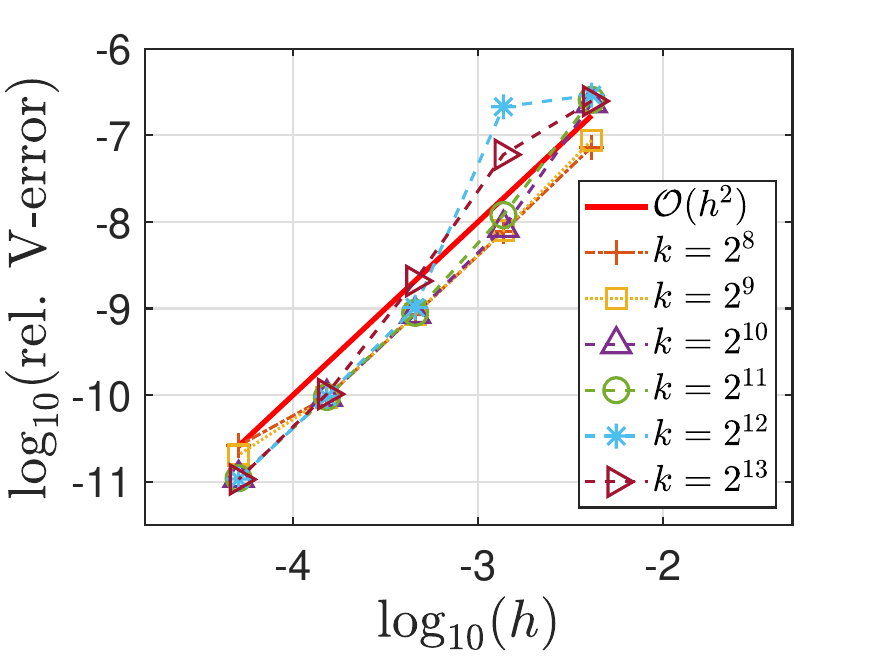}\includegraphics[width=0.45\linewidth]{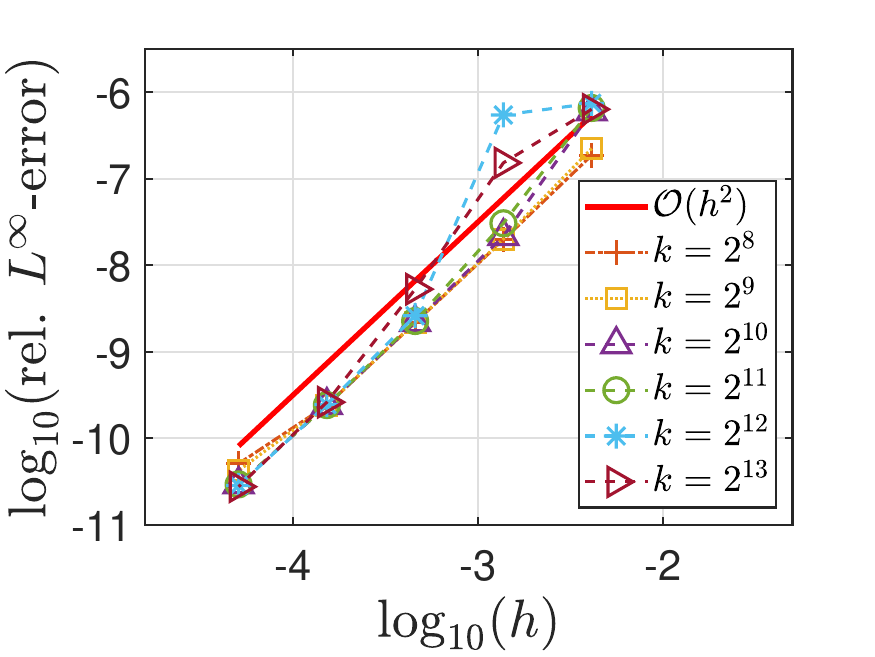}
    \caption{Results for the smooth-source test: convergence with
    respect to $h$ in the relative $V$-norm (left) and the relative
    $L^\infty$-norm (right).}
    \label{fig:1D-convergence-smooth-f}
\end{figure}

\subsection{A nonsmooth source test}

We consider a nonsmooth source term to illustrate the
robustness of the BPF scheme beyond the regularity assumptions required in Theorem~4.3. We take $f(x)=50$ for $|x-0.5|\le 1/9$ and $f(x)=0$ else,
together with the nonhomogeneous impedance boundary condition
\begin{equation}\label{equ:nontrival-Robin-boundary-1D}
    u'(0) - \ii k u(0) = 2, \qquad u'(1) + \ii k u(1) = \ii.
\end{equation}
Since an exact solution is not available, we compute a reference solution on the fine mesh $h=3^{-12}$. This example does not satisfy the smoothness assumptions of the theoretical analysis. We choose $h=3^{-10}\,\dots,\,3^{-5}$.
As shown in Figure~\ref{fig:1D-convergence-discontinuous-f}, the BPF
scheme still exhibits an approximately second-order convergence trend
with respect to $h$ in the relative $V$- and
$L^\infty$-norms, despite the low regularity of the source term.
\begin{figure}[ht]
    \centering
    \includegraphics[width=0.45\linewidth]{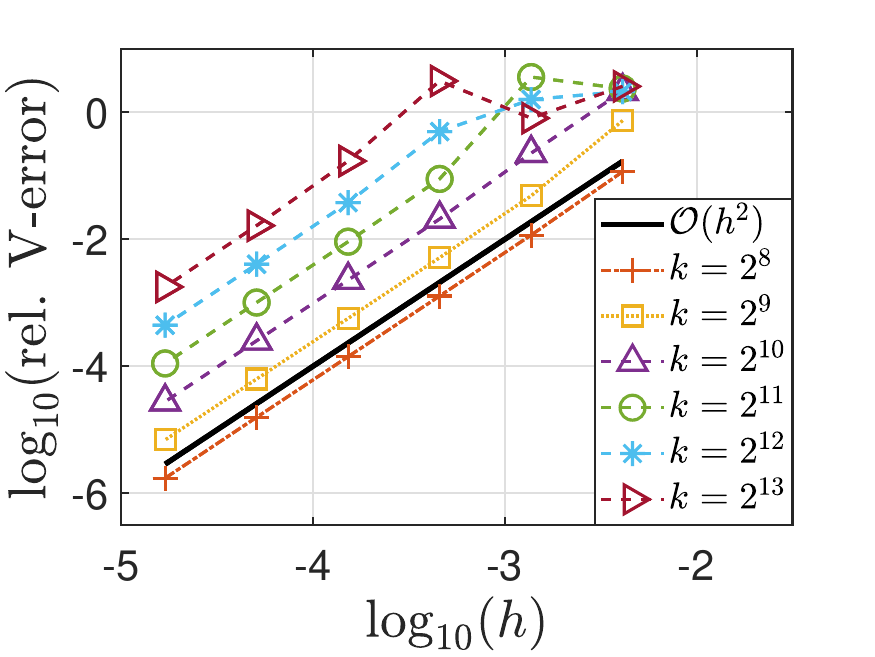}\includegraphics[width=0.45\linewidth]{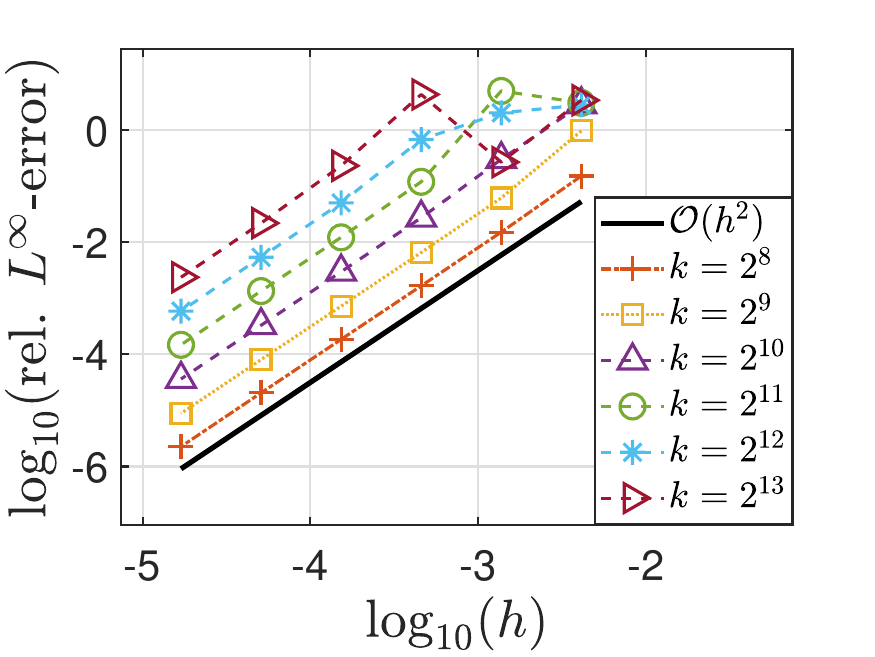}
    \caption{Results for the nonsmooth-source test: convergence with
    respect to $h$ in the relative $V$-norm (left) and the relative
    $L^\infty$-norm (right)}
    \label{fig:1D-convergence-discontinuous-f}
\end{figure}

\subsection{Fixed-resolution behavior}
The resolution condition plays a fundamental role in the numerical
simulation of highly oscillatory Helmholtz problems. In particular, it
is closely related to pollution effects and on the number of grid
points per wavelength. We therefore examine the behavior of the BPF
scheme for varying values of $k$ and $h$, and compare it with the
classical finite difference (FD) method and the dispersion-corrected
FD method~\cite{cocquet2024asymptotic}.

We consider the problem with source term
$f(x)=\sin^2(\pi x)$, together with the impedance boundary condition
\eqref{equ:nontrival-Robin-boundary-1D}. Then $f\in H^3(0,L)\cap H^2_0(0,L)$ satisfies the assumption of Theorem 4.3. The reference solution is
computed on the fine mesh $h=2^{-18}$. For the present benchmark, the
numerical data indicate that $\|u\|_{L^\infty}$ and
$\|u\|_{L^2}$ scale like $k^{-1}$, whereas the discrete energy
norm $\|u\|_{V}$ is nearly independent of $k$. We therefore use the
relative $V$-norm error as the main quantity. For
the comparison plots, we instead use the relative $L^\infty$-error,
which provides a direct pointwise measure of accuracy and gives a clear
visual comparison between the three methods.

Table~\ref{tab:convergence-wrt-kh-BPF} reports the relative
$V$-norm error of the BPF scheme for different values of $k$ and $h$.
The table contains two types of information. Along each row, the error
decreases by approximately a factor of four when $h$ is halved,
indicating a second-order convergence trend for fixed $k$. Along each
parallel diagonal, the quantity $kh$ is fixed, or equivalently, the
number of grid points per wavelength $\mathrm{PPW}=2\pi/(kh)$ is fixed. Along these diagonals, the error decreases steadily as $k$
increases, which confirms the $O(k^{-2})$ decay predicted by the
second-order convergence estimate under fixed resolution. In the
present example, the observed decay is in fact empirically faster than
this theoretical rate. The table also contains values outside the
principal Nyquist regime $0<kh<\pi$; these are still covered by the
general theory, since the main estimates hold for all
$kh\notin\pi\mathbb Z$. For example, when $k=2^{10}$ and $h=2^{-5}$, so that $kh=32$, the
relative $V$-norm error is still only $4.08\times 10^{-5}$ in this benchmark.

\begin{table}[ht]
    \centering
    \caption{Relative $V$-norm error of the BPF scheme for different values
of $k$ and $h$. Parallel diagonals correspond to fixed values of $kh$, or equivalently, fixed numbers of grid points per wavelength.}
    \begin{tabular}{||c|c|c|c|c|c|c||}
        \hline
        \diagbox{$k$}{$h$} & $2^{-5}$ & $2^{-6}$ & $2^{-7}$ & $2^{-8}$ & $2^{-9}$ & $2^{-10}$ \\
        \hline
        $2^5$ & 4.18e-05 & 1.01e-05 & 2.52e-06 & 6.27e-07 & 1.56e-07 & 3.85e-08 \\
        $2^6$ & 2.29e-05 & 5.05e-06 & 1.22e-06 & 2.96e-07 & 6.82e-08 & 1.81e-08\\
        $2^7$ & 2.48e-05 & 2.86e-06 & 6.26e-07 & 1.49e-07 & 3.41e-08 & 9.85e-09\\
        $2^8$ & 2.10e-05 & 3.00e-06 & 3.64e-07 & 8.45e-08 & 2.58e-08 & 1.31e-08\\
        $2^9$ & 6.16e-06 & 2.55e-06 & 3.76e-07 & 4.36e-08 & 8.83e-09 & 2.24e-09 \\
        $2^{10}$ & 4.08e-05 & 7.51e-07 & 3.16e-07 & 4.64e-08 & 5.08e-09 & 1.24e-09 \\
        \hline
    \end{tabular}
    \label{tab:convergence-wrt-kh-BPF}
\end{table}

When the classical finite difference (FD) method is used, increasingly fine meshes are
required for large wavenumbers because of the well-known dispersion
pollution effect. A dispersion-corrected FD method was proposed in
\cite{cocquet2024asymptotic} to alleviate this difficulty; in that
approach, the error is primarily governed by the quantity $kh$, so that
reasonable accuracy may still be obtained when the resolution is
chosen appropriately. Figure~\ref{fig:fix-kh-cpmparsion} provides a
direct visual comparison of the three methods, all evaluated at fixed values of
$kh$. It shows that the classical FD error deteriorates rapidly as $k$
increases, while the dispersion-corrected FD method improves this
behavior but still remains significantly less accurate than the BPF method over
the tested parameter range. By contrast, the BPF scheme consistently
produces the smallest errors, confirming its clear advantage in this
benchmark.
As in the relative $V$-norm data, the relative $L^\infty$-error also
shows an empirically faster decay than the $O(k^{-2})$ behavior
guaranteed by our theory, with an apparent rate close to third order
in this benchmark.

\begin{figure}[ht]
    \centering
    \includegraphics[width=0.35\linewidth]{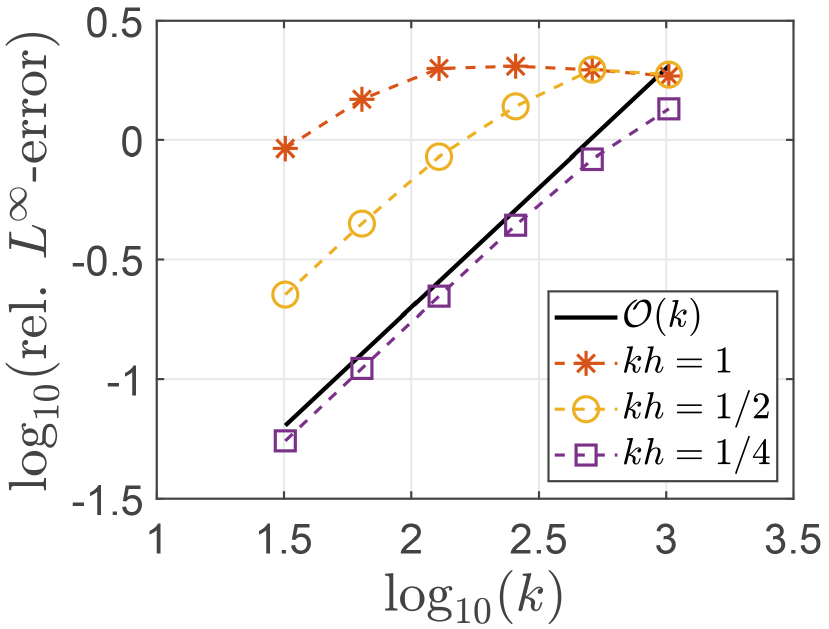}\includegraphics[width=0.33\linewidth]{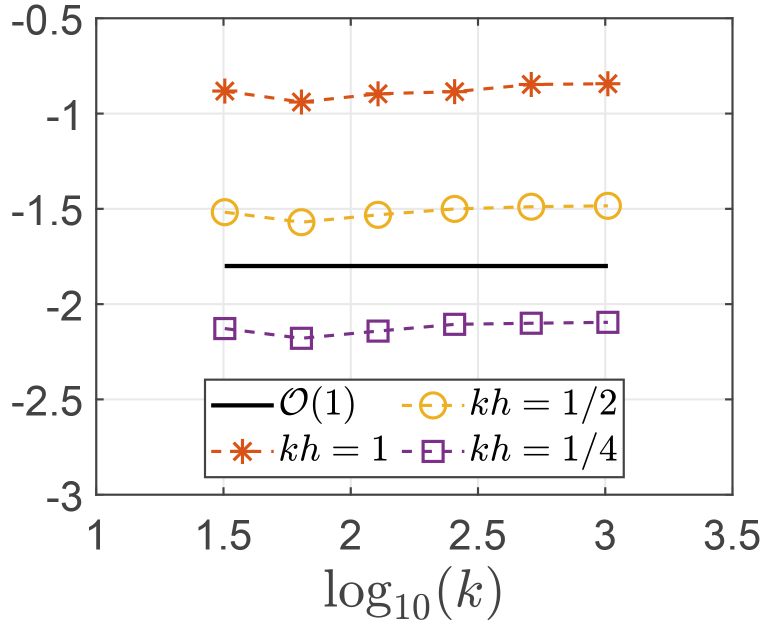}\includegraphics[width=0.32\linewidth]{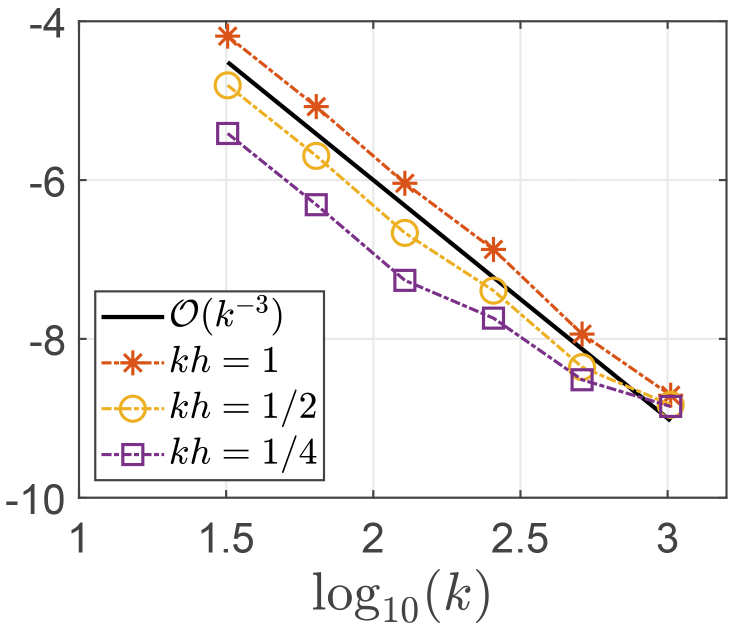}
    \caption{Relative $L^\infty$-error versus $k$ for the classical FD
method (left), dispersion-corrected FD method (middle), and BPF scheme evaluated at fixed values of $kh$ (right).}
    \label{fig:fix-kh-cpmparsion}
\end{figure}

\subsection{A preliminary 2D plane-wave test}


Although the present paper is mainly focused on the theoretical analysis of the BPF for the one-dimensional Helmholtz
equation, we briefly report a preliminary two-dimensional experiment to
illustrate the behavior of the factorized construction in a constant-coefficient plane-wave setting.
We test this discretization on
\begin{align*}
\Delta u + k^2 u = 0\quad\mbox{in }\Omega=(0,1)^2, \quad
u=f \quad\mbox{on }\pa \Omega,  
\end{align*}
where the boundary data are chosen so that the plane-wave 
\begin{equation}\label{2d plane wave}
u(x,y)=\sin\!\Big(\frac{k(x+y)}{\sqrt2}\Big)
\end{equation}
is the exact solution, as simulated in \cite{cocquet2024asymptotic}. 
We use the two-dimensional BPF scheme
\begin{equation}\label{2D BPF}
D_{x,k_1}^-D_{x,k_1}^+u_{i,j}
+
D_{y,k_2}^-D_{y,k_2}^+u_{i,j}
= f_{i,j},
\qquad 1\le i,j\le n-1,
\end{equation}
where \(D_{x,k_1}^{\pm}\) and \(D_{y,k_2}^{\pm}\) are the one-dimensional BPF
one-way difference operators applied in the \(x\)- and \(y\)-directions,
respectively.
In this test, we choose the parameter in the scheme \eqref{2D BPF} by
\[
k_1=k_2=\frac{k}{\sqrt2},
\]
which is aligned with the propagation direction of the exact plane-wave \eqref{2d plane wave}.
Thus the test is aligned with the directional construction in
Remark~\ref{remark:directional-multidimension}, and the one-dimensional plane-wave exactness mechanism applies in each coordinate direction.

\begin{table}[ht]
    \centering
    \caption{Absolute $L^\infty$-error for the preliminary 2D plane-wave test.}
    \begin{tabular}{||c|c|c|c|c|c||}
       \hline
       \diagbox{$k$}{$h$} & $\frac{1}{50}$ & $\frac{1}{100}$ & $\frac{1}{200}$ & $\frac{1}{500}$ & $\frac{1}{1000}$ \\
       \hline
       50 & 1.74e-14 & 2.51e-14 & 2.58e-14 & 3.03e-14 & 6.33e-14 \\
        200 & 1.35e-13 & 1.43e-13 & 5.80e-13 & 9.51e-14 & 1.99e-13 \\
        500 & 2.29e-13 & 3.07e-13 & 1.28e-12 & 7.82e-13 & 6.30e-13 \\
        1000 & 8.51e-13 & 4.30e-13 & 3.96e-12 & 2.72e-12 & 4.66e-12\\
        \hline
    \end{tabular}
    \label{tab:2D-plane-wave}
\end{table}

Table~\ref{tab:2D-plane-wave} reports the
absolute $L^\infty$-error for several values of $k$ and $h$. In this
constant-coefficient plane-wave benchmark, the errors remain at the
level of machine precision, indicating that the directional BPF discretization reproduces the tested two-dimensional plane-wave data
very accurately. The test serves as a numerical illustration of the directional exactness
mechanism in Remark~\ref{remark:directional-multidimension}. A full
multidimensional analysis is left for future work.

\section{Conclusion and outlook}

In this paper, we introduced a Bernoulli phase-fitted
scheme for the Helmholtz equation and developed a wavenumber-explicit analysis in the one-dimensional
impedance setting. The scheme is derived from a complexified Scharfetter--Gummel
discretization of the one-way factorization of the Helmholtz operator.
This construction preserves the factorized propagation structure and, for the homogeneous problem, reproduces sampled plane-wave solutions exactly.

For the inhomogeneous problem, we
established well-posedness, derived wavenumber-explicit stability
estimates, and proved second-order consistency and convergence. These
results hold for all $kh\notin \pi\mathbb Z$, and in
particular yield a pollution-free convergence theory for fixed
resolution within the principal Nyquist regime. 
The numerical experiments confirm the exactness
property and the predicted convergence behavior, and show favorable
fixed-resolution performance compared with standard and dispersion-corrected
finite difference methods.

The present work can be viewed as a first step in transferring the
Scharfetter--Gummel fitting principle from drift--diffusion transport to
Helmholtz wave propagation. Since the construction is local and factorized, a natural next direction is to develop multidimensional versions, in particular in finite volume or edge-based finite element frameworks. Another direction is to combine the BPF discretization
with higher-order discretizations (e.g., compact finite difference method).

\appendix
\section{Auxiliary results}

We prove some results used in this paper.



\begin{proposition}
\label{prop:factorization}
For $i\in\mathcal I_h$, the interior scheme in \eqref{discretization SG} can be written as
\begin{equation*}
D_k^- D_k^+ u_i  = \Theta(kh)\,\frac{u_{i+1}-2u_i+u_{i-1}}{h^2} + k^2 u_i,
\end{equation*}
and the boundary conditions in \eqref{discretization SG} are equivalent to
\begin{equation}\label{eq:bc-Dform}
\begin{aligned}
\frac{1}{m(kh)} D_k^+ u_0
&=
\frac{k}{\sin(kh)}
\big(u_{1}-e^{\ii kh}\,u_{0}\big), \\
\frac{1}{m(kh)} D_k^- u_n
&=
\frac{k}{\sin(kh)}
\big(e^{\ii kh}u_{n}-\,u_{n-1}\big),
\end{aligned}
\end{equation}
where $m(s)=e^{-\ii s/2}\cos(s/2)$ for $s>0$.
\end{proposition}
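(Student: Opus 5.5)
The plan is a direct algebraic computation built on the two identities in \eqref{eq:basicIds}, $B(-z)=e^{z}B(z)$ and $B(-z)-B(z)=z$, applied with $z=\ii kh$. Write $b_\pm:=B(\pm\ii kh)$, so that $b_+b_-=\Theta(kh)$, $b_--b_+=\ii kh$ and $b_-=e^{\ii kh}b_+$.

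For the interior identity, apply the definitions directly:
\[
h^2 D_k^-D_k^+u_i = b_-\bigl(b_+u_{i+1}-b_-u_i\bigr)-b_+\bigl(b_+u_i-b_-u_{i-1}\bigr)
= \Theta\,(u_{i+1}+u_{i-1}) - (b_-^2+b_+^2)\,u_i .
\]
Next write $b_-^2+b_+^2=(b_--b_+)^2+2b_+b_-=-k^2h^2+2\Theta$. The right-hand side then becomes $\Theta(u_{i+1}-2u_i+u_{i-1})+k^2h^2u_i$. Dividing by $h^2$ gives the three-point form. This step is routine.

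For the boundary identities, first use $b_-=e^{\ii kh}b_+$ to get
\[
D_k^+u_0=\tfrac{b_+}{h}\bigl(u_1-e^{\ii kh}u_0\bigr),\qquad D_k^-u_n=\tfrac{b_+}{h}\bigl(e^{\ii kh}u_n-u_{n-1}\bigr).
\]
It then remains to verify the key identity
\[
\frac{b_+}{h\,m(kh)}=\frac{k}{\sin(kh)},\qquad\text{equivalently}\qquad B(\ii s)\sin s = s\,m(s),\quad s=kh.
\]
To check it, compute $B(\ii s)=\ii s/(e^{\ii s}-1)$. Since $e^{\ii s}-1=2\ii e^{\ii s/2}\sin(s/2)$, this gives $B(\ii s)=\frac{s\,e^{-\ii s/2}}{2\sin(s/2)}$. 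Multiplying by $\sin s=2\sin(s/2)\cos(s/2)$ yields $s\,e^{-\ii s/2}\cos(s/2)=s\,m(s)$, as required. This is the identity referred to as \eqref{eq:key-id}, and it also gives \eqref{eq:rewrite D+} as used in Proposition~\ref{lem:exact-bdry}.

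The only care needed is nondegeneracy. We need $kh\notin\pi\mathbb Z$ so that $\sin(kh)\ne0$ and $m(kh)\ne0$; in particular $\cos(kh/2)\neq0$, since $kh\notin\pi+2\pi\mathbb Z$. We also need $kh\notin 2\pi\mathbb Z$ so that $B(\ii kh)$ is finite. Under these conditions dividing by $m(kh)$ is legitimate, and the boundary conditions in \eqref{discretization SG} are equivalent to \eqref{eq:bc-Dform}. The main obstacle, if any, is bookkeeping of the phase factor $e^{\pm\ii s/2}$ in this last identity.
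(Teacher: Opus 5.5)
Your proposal is correct and follows the paper's proof essentially step for step. You use the same expansion of $h^2D_k^-D_k^+u_i$ together with $B^2(\ii kh)+B^2(-\ii kh)=(B(-\ii kh)-B(\ii kh))^2+2\Theta=2\Theta-k^2h^2$, the same rewriting of the boundary fluxes via $B(-z)=e^{z}B(z)$, and the same identity $B(\ii s)=s\,e^{-\ii s/2}/(2\sin(s/2))$ yielding $B(\ii s)/m(s)=s/\sin s$. Your explicit note that $kh\notin\pi\mathbb Z$ keeps $\sin(kh)$ and $m(kh)$ nonzero and $B(\pm\ii kh)$ finite is a small clarification that the paper leaves implicit.
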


\begin{proof}
A direct calculation shows that 
\begin{equation}\label{transform D+D-}
    \begin{aligned}
    D_k^- D_k^+ u_i 
    &=
    \frac{1}{h}\!\left(B(\ii kh)\,D_k^-u_{i+1}-B(-\ii kh)\,D_k^-u_i\right)\\
    &=
    \frac{1}{h^2}\!\big(\Theta( kh)\,u_{i+1} 
    - \big( B^2(\ii kh) + B^2(-\ii kh) \big)u_i
    + \Theta(kh)\, u_{i-1}\big)
    \end{aligned}
\end{equation}
We deduce from \eqref{eq:basicIds} that
\begin{align*}
    B^2(\ii kh) + B^2(-\ii kh) 
    &= \big( B(\ii kh) - B(-\ii kh) \big)^2 + 2B(\ii kh) B(-\ii kh) \\
    &= - k^2h^2 + 2\Theta (kh).
\end{align*}
A substitution into \eqref{transform D+D-} leads to the interior formula.
For the boundary condition, we apply identity \eqref{eq:basicIds} again to find that
\begin{equation}\label{eq:rewrite D+}
    D_k^+u_0= \frac{1}{h}\Big(B(\ii kh)\,u_{1}-B(-\ii kh)\,u_{0}\Big)
    =
    \frac{B(\ii kh)}{h}\big(u_{1}-e^{\ii kh}\,u_{0}\big).
\end{equation}
It follows from the definition of $m(s)$ and $2\sin(s/2)\cos(s/2) = \sin(s)$ that
\begin{equation}\label{eq:key-id}
B(\ii s)
= \frac{\ii s}{e^{\ii s}-1}
= \frac{se^{-\ii s/2}}{2\sin(s/2)},
\quad\text{so}\quad
\frac{B(\ii s)}{m(s)}
=
\frac{s}{\sin(s)}.
\end{equation}
Using this identity into \eqref{eq:rewrite D+} leads to the first formula in \eqref{eq:bc-Dform}. The second formula in \eqref{eq:bc-Dform} can be derived similarly.
\end{proof}



\begin{lemma}
\label{lem:appendix-g}
Let $g(t):=\sin^2(\sqrt t)/t$ and $h(t):=\sqrt{g(t)}=\sin(\sqrt t)/\sqrt t$ for $t>0$.
Then \(g,h\in C^1(0,\infty)\) and
\begin{equation}
\label{eq:gprime-bound}
\sup_{t>0} |g'(t)| \le \frac13,
\qquad 
\sup_{t>0} |h'(t)| \le \frac16.
\end{equation}
\end{lemma}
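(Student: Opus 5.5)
Our plan is to exploit the series representations
\[
g(t)=\frac{\sin^2\sqrt t}{t}=\frac{1-\cos(2\sqrt t)}{2t}=\sum_{m\ge0}\frac{(-1)^m 2^{2m+1}}{(2m+2)!}\,t^{m},
\qquad
h(t)=\frac{\sin\sqrt t}{\sqrt t}=\sum_{m\ge0}\frac{(-1)^m}{(2m+1)!}\,t^m .
\]
Both are entire functions of $t$, so $g,h\in C^1(0,\infty)$ (indeed $C^\infty$ up to $t=0$). At $t=0$ we read off $h'(0)=-1/6$ and $g'(0)=-1/3$. So the claimed bounds are sharp and attained in the limit $t\to0^+$. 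It therefore remains to show that $|h'|$ and $|g'|$ never exceed their values at the origin.

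\textbf{For $h$.} Substitute $x=\sqrt t$. Then
\[
h'(t)=\frac{x\cos x-\sin x}{2x^3}.
\]
The cleanest route is the integral representation
\[
h(t)=\int_0^1\cos(\sqrt t\,\sigma)\,d\sigma,
\]
which follows from $\frac{\sin x}{x}=\int_0^1\cos(x\sigma)\,d\sigma$. Differentiating under the integral,
\[
h'(t)=-\int_0^1\frac{\sigma\sin(x\sigma)}{2x}\,d\sigma .
\]
Using $|\sin(x\sigma)|\le x\sigma$ gives
\[
|h'(t)|\le\int_0^1\frac{\sigma^2}{2}\,d\sigma=\frac16 .
\]

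\textbf{For $g$.} Write $g=h^2$, so $g'=2hh'$ and $|g'|\le 2|h|\,|h'|$. Since $|h|\le1$ (as $|\sin x|\le x$), we get
\[
|g'(t)|\le 2\cdot 1\cdot\frac16=\frac13 .
\]
Alternatively, use
\[
g(t)=\frac{1-\cos(2x)}{2x^2}=2\int_0^1(1-\sigma)\cos(2x\sigma)\,d\sigma,
\]
differentiate in $t$ in the same way, and bound $|\sin(2x\sigma)|\le 2x\sigma$. This gives
\[
|g'|\le 2\int_0^1(1-\sigma)\,2\sigma^2\,d\sigma=\frac13 .
\]
Either route works; the product route is shortest once the $h$ bound is in hand.

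The only step needing care is the justification of differentiation under the integral sign. It is routine here, since the integrands are smooth in $(t,\sigma)$ on $(0,\infty)\times[0,1]$ with locally bounded derivatives. Apart from that, the main point to verify is that the inequality $|\sin y|\le|y|$ applied inside the integral loses nothing at leading order, which is consistent with the equality at $t\to0$. I expect no real obstacle. If the integral representation were to be avoided, a fallback is to show directly that $\phi(x)=\sin x-x\cos x$ satisfies $0\le\phi(x)\le x^3/3$. This holds because $\phi'(x)=x\sin x\le x^2$ and $\phi(0)=0$. That gives $|h'|\le1/6$ for $x\le\pi$, while for larger $x$ one uses the crude bound $|\phi|\le 1+x$.
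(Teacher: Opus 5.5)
Your proof is correct and is essentially the paper's argument: the substitution $r=x\sigma$ turns your representation $h'(t)=-\frac{1}{2x}\int_0^1\sigma\sin(x\sigma)\,d\sigma$ into the paper's $h'(t)=-\frac{1}{2x^3}\int_0^x r\sin r\,dr$, which is then bounded using $|\sin r|\le r$, and your bound for $g$ via $g'=2hh'$ with $|h|\le 1$ is identical to the paper's. Your use of $|h|\le1$ is in fact slightly cleaner than the paper's ``$h(t)\le 1$'', since $h$ changes sign.
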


\begin{proof}
We first estimate $h'(t)$. For \(t>0\), write \(t=s^2\) with \(s>0\). 
Then
\[
h'(t)
=
\frac{1}{2s}\,\frac{d}{ds}\Big(\frac{\sin s}{s}\Big)
=
\frac{s\cos s-\sin s}{2s^3},
\]
and the identity $\sin s-s\cos s = \int_0^s r\sin r\,dr$ yields the estimate
\[
|h'(t)|
\le
\frac{1}{2s^3}\int_0^s r|\sin r|\,dr
\le
\frac{1}{2s^3}\int_0^s r^2\,dr
=
\frac{1}{2s^3}\cdot \frac{s^3}{3}
=
\frac16.
\]
Finally, since $h(t)\le 1$, we have
$|g'(t)|\le 2\, h(t)\, |h'(t)|\le 1/3$.
This proves \eqref{eq:gprime-bound}. 
\end{proof}

\bibliographystyle{siamplain}
\bibliography{reference}

\end{document}